\definecolor{verylight}{gray}{0.97}
\definecolor{light}{gray}{0.9}
\definecolor{medium}{gray}{0.85}
\definecolor{dark}{gray}{0.6}
\def\NZQ{\mathbb}               
\def\KK{{\NZQ K}}
 \def\ab{{\mathbf a}}
 \def\xb{{\mathbf x}}
\def\KK{{\NZQ K}}
\def\G{{\mathcal G}}
\def\C{{\mathcal C}}
\def\ab{{\mathbf a}}
\def\bb{{\mathbf b}}
\def\xb{{\mathbf x}}
\def\cb{{\mathbf c}}
\def\db{{\mathbf d}}
\def\0b{{\mathbf 0}}
\def\alphab{{\mathbf \alphab}}
\def\c_ib{{\mathbf c_i}}
\def\reg{{\mathbf reg}}
\def\height{\operatorname{ht}}
\def\depth{\operatorname{depth}}
\def\opn#1#2{\def#1{\operatorname{#2}}} 
\opn\chara{char} \opn\length{\ell} \opn\pd{pd} \opn\rk{rk}
\opn\projdim{proj\,dim} \opn\injdim{inj\,dim} \opn\rank{rank}
\opn\depth{depth} \opn\grade{grade} \opn\height{height}
\opn\embdim{emb\,dim} \opn\codim{codim}
\opn\Tr{Tr} \opn\bigrank{big\,rank}
\opn\superheight{superheight}\opn\lcm{lcm}
\opn\trdeg{tr\,deg}
\opn\reg{reg} \opn\lreg{lreg} \opn\ini{in} \opn\lpd{lpd}
\opn\size{size} \opn\sdepth{sdepth}
\opn\link{link}\opn\fdepth{fdepth}\opn\lex{lex}
\opn\tr{tr}
\opn\type{type}
\opn\gap{gap}
\opn\arithdeg{arith-deg}
\opn\HS{HS}
\opn\GL{GL}
\opn\div{div} \opn\Div{Div} \opn\cl{cl} \opn\Cl{Cl}
\opn\Spec{Spec} \opn\Supp{Supp} \opn\supp{supp} \opn\Sing{Sing}
\opn\Ass{Ass} \opn\Min{Min}\opn\Mon{Mon}
\opn\Ann{Ann} \opn\Rad{Rad} \opn\Soc{Soc}\opn\Deg{Deg}
\opn\Im{Im} \opn\Ker{Ker} \opn\Coker{Coker} \opn\Am{Am}
\opn\Hom{Hom} \opn\Tor{Tor} \opn\Ext{Ext} \opn\End{End}
\opn\Aut{Aut} \opn\id{id}
\opn\nat{nat}
\opn\pff{pf}
\opn\Pf{Pf} \opn\GL{GL} \opn\SL{SL} \opn\mod{mod} \opn\ord{ord}
\opn\Gin{Gin} \opn\Hilb{Hilb}\opn\sort{sort}
\opn\PF{PF}\opn\Ap{Ap}
\opn\mult{mult}
\opn\bight{bight}
\opn\aff{aff}
\opn\relint{relint} \opn\st{st}
\opn\lk{lk} \opn\cn{cn} \opn\core{core} \opn\vol{vol}  \opn\inp{inp} \opn\nilpot{nilpot}
\opn\link{link} \opn\star{star}\opn\lex{lex}\opn\set{set}
\opn\width{wd}
\opn\Fr{F}
\opn\QF{QF}
\opn\G{G}
\opn\type{type}\opn\res{res}
\opn\conv{conv}
\opn\Ind{Ind}
\opn\gr{gr}
\def\Rees{{\mathcal R}}
\def\pot#1#2{#1[\kern-0.28ex[#2]\kern-0.28ex]}
\opn\dirlim{\underrightarrow{\lim}}
\opn\inivlim{\underleftarrow{\lim}}
\def\Implies{\ifmmode\Longrightarrow \else
	\unskip${}\Longrightarrow{}$\ignorespaces\fi}
\def\implies{\ifmmode\Rightarrow \else
	\unskip${}\Rightarrow{}$\ignorespaces\fi}
\def\iff{\ifmmode\Longleftrightarrow \else
	\unskip${}\Longleftrightarrow{}$\ignorespaces\fi}
\newtheorem{Theorem}{Theorem}[section]
\newtheorem{Lemma}[Theorem]{Lemma}
\newtheorem{Corollary}[Theorem]{Corollary}
\newtheorem{Remark}[Theorem]{Remark}
\newtheorem{Definition}[Theorem]{Definition}
\let\epsilon\varepsilon
\let\kappa=\varkappa
\def\qed{\ifhmode\textqed\fi
	\ifmmode\ifinner\quad\qedsymbol\else\dispqed\fi\fi}
\def\textqed{\unskip\nobreak\penalty50
	\hskip2em\hbox{}\nobreak\hfil\qedsymbol
	\parfillskip=0pt \finalhyphendemerits=0}
\def\dispqed{\rlap{\qquad\qedsymbol}}
\opn\dis{dis}
\def\pnt{{\raise0.5mm\hbox{\large\bf.}}}
\opn\Lex{Lex}
\newcommand*{\circled}[1]{\lower.7ex\hbox{\tikz\draw (0pt, 0pt)%
		circle (.5em) node {\makebox[1em][c]{\small #1}};}}
\begin{document}
	\title {Integral closure and normality of edge ideals of some edge-weighted graphs}
	
	\author {Shiya Duan, Guangjun Zhu$^{\ast}$, Yijun Cui and Jiaxin Li}

	\address{School of Mathematical Sciences, Soochow University, Suzhou, Jiangsu, 215006, P. R. China}

	\email{3136566920@qq.com(Shiya Duan), zhuguangjun@suda.edu.cn(Corresponding author:Guangjun Zhu),
		237546805@qq.com(Yijun Cui),lijiaxinworking@163.com(Jiaxin Li).}
	
	\thanks{$^{\ast}$ Corresponding author}

\thanks{2020 {\em Mathematics Subject Classification}.
Primary 13B22, 13F20; Secondary 05C99, 05E40}

\thanks{Keywords:  Integrally closed, normal, edge-weighted graph, edge ideal}

	
	
	\maketitle
\begin{abstract}
Let $G_\omega$ be an edge-weighted simple graph. In this paper, we  give a  complete  characterization of the graph $G_\omega$ whose edge ideal $I(G_\omega)$ is integrally closed.
We also show that if  $G_\omega$ is an edge-weighted star graph,  a  path or a cycle, and $I(G_\omega)$ is integrally closed, then $I(G_\omega)$ is normal.
\end{abstract}
	\setcounter{tocdepth}{1}

    \section{Introduction}
Let $S=\KK[x_{1},\dots, x_{n}]$ be a polynomial ring in $n$ variables over a field $\KK$. The class of monomial ideals of $S$ has been intensively studied and many problems arise in when we would like to study good properties of monomial ideals, such as the integral closure and normality. Recall that an ideal $I\subset S$ is called {\em integral closure} if $I=\overline{I}$ (see Definition \ref{integrally closed_1}
 for the  exact  definitions of $\overline{I}$),  and $I$ is called {\em normal} if $I^i=\overline{I^i}$  for all $i\ge 1$. This notion is  related to the graded algebras arising from $I$ such as the Rees algebra $\Rees(I)=\oplus_{i\ge 0} I^i t^i $. It is known that  $I$ is normal if and only if $\Rees(I)$ is normal, see \cite[Theorem 4.3.17]{V3}.  This  highlights the importance of studying the normality of ideals.  It is well-known that every square-free monomial ideal is integrally closed, see \cite[Theorem 1.4.6]{HH}. Appearing as edge and cover ideals of graphs, the square-free monomial ideals play a key role in the connection between commutative algebra and combinatorics, see \cite{EVY,V1}.
The normality of such ideals has been of interest to many authors, see \cite{RV,V1,V2}.  For example, in \cite{SVV} it is shown  that the edge ideals of bipartite graphs are normal. And it is also shown in \cite[Corollary 14.6.25]{V3} that the cover ideals of perfect graphs are normal.  In \cite{ANR} it is shown that the cover ideals of odd cycles and wheel graphs are normal.

Let $G$ be a simple graph with vertex set $V(G)=[n]$ and edge set $E(G)$,  where  $[n]$ is by convention  the set $\{1,2,\ldots, n\}$. Let $G_\omega$ be
an {\em edge-weighted} (or simply weighted)  graph whose underlying graph is $G$, that is, $G_\omega$ is a triplet $(V(G_\omega), E(G_\omega), w)$
where $V(G_\omega)=V(G)$, $E(G_\omega)=E(G)$  and $w: E(G_\omega)\rightarrow \mathbb{N}^{+}$ is a weight function. Here $\mathbb{N}^{+}$ denotes the set of positive
integers. We often  write $G_\omega$ for the triplet $G_\omega=(V(G_\omega), E(G_\omega),\omega)$. In other words, $G_\omega$ is obtained from $G$ by assigning  a weight to its edges.
An edge-weighted  graph is  called a \emph{non-trivially  weighted} graph if there is at least one edge with  a weight  greater than $1$. Otherwise, it is called a
{\em trivially weighted} graph.  We consider the polynomial ring  $S=\KK[x_{1},\dots, x_{n}]$   in $n$ variables over a field $\KK$.
 The  {\em edge-weighted ideal}  (or simply edge ideal)  of   $G_\omega$, was introduced in \cite{PS}, is  the ideal of $S$ given by
$$I(G_\omega)=(x_i^{\omega(e)}x_j^{\omega(e)}\mid e:=\{i,j\}\in E(G_\omega)).$$
If $G_\omega$ is trivially weighted, then  $I(G_\omega)$ is the usual edge ideal of underlying graph $G$ of $G_\omega$,  that
has been extensively studied in the literature \cite{GV, HH,MV,VT,V3,Zhu1,Zhu2}.

Paulsen and Sather-Wagstaff in \cite{PS} studied the primary decomposition of these ideals. They
also studied the unmixedness and Cohen-Macaulayness of these ideals, in the case where $G_\omega$ is a
cycle, a tree, or a complete graph. In \cite{FSTY},  Seyed Fakhari et al.  characterize the unmixedness and Cohen-Macaulayness of edge-weighted ideals of very
well-covered graphs.

Little is known about integral closure and normality of edge ideals of edge-weighted graphs.  In this paper, we aim to characterise a weighted graph  $G_\omega$ whose edge ideal $I(G_\omega)$ is integrally closed. Under the condition that $I(G_\omega)$ is integrally closed,
we  show that $I(G_\omega)$ is normal if  $G_\omega$ is a weighted star graph or a path or a cycle.

The paper is organized as follows. In Section $2$, we recall some essential definitions and
terminology that we will need later. In Section $3$, we  give a complete characterization of a weighted graph $G_\omega$ whose edge ideal $I(G_\omega)$ is integrally closed.
Under the condition that $I(G_\omega)$  is integrally closed, we show in  Section $4$  that  $I(G_\omega)$  is normal if $G_\omega$ is a weighted star graph or a path or a cycle.

\section{Preliminary}
In this section, we gather together the needed definitions and basic facts that will
be used throughout this paper. However, for more  details, we refer the reader to \cite{BM,HH,HS,PS,V}.

A weighted graph $H_\omega=(V(H), E(H),\omega)$ is called an \emph{induced subgraph} of a weighted graph   $G=(V(G), E(G),\omega)$ if $V(H)\subset V(G)$,  for any $u,v\in V(H)$, $\{u,v\}\in E(H)$  if and only if  $\{u,v\}\in E(G)$, and its weight $\omega_H(\{u,v\})$ in $H$ is equal to its weight $\omega_G(\{u,v\})$ in $G$. For  convenience, we  call $H_\omega$ an induced subgraph of $G_\omega$.  For $A\subset V(G)$, let $G[A]$ denote the \emph{induced subgraph} of $G$ on the  set $A$. 

 A connected weighted graph  $G_\omega$  is called a cycle if $\deg_G(v)=2$ for all $v\in  V(G)$. A cycle with $n$
vertices is called to be an $n$-cycle,  denoted by $C_\omega^n$. A connected weighted graph on  the set $[n]$ is  called  a path, if
$E(G)=\{\{i,i+1\} | 1\le i \le n-1\}$. Such a path is usually denoted by $P_\omega^n$.
 A weighted simple graph $G_\omega$ on vertex set $[n]$ is called to be a complete graph, if $\{i,j\} \in E(G)$ for all $i,j \in [n]$. A complete graph with $n$  vertices is usually denoted by $K_\omega^n$. A weighted graph $G_\omega$ is chordal if every induced cycle in $G_\omega$ is a $3$-cycle $C_\omega^3$.

 \begin{Definition}{\em (\cite[Definition 1.4.1]{HH})}\label{integrally closed_1} Let $R$ be a ring and $I$ an ideal in $R$.
    An element $f \in R$ is  said to be \emph{integral} over $I$, if there exists an equation
    \[
    f^k+c_1f^{k-1}+\dots+c_{k-1}f+c_k=0 \text{\ \ with\ \ }c_i \in I^i.
    \]
    The set $\overline{I}$  of elements in $R$ which are integral over $I$ is the \emph{integral closure}
    of $I$. The ideal $I$ is {\em integrally closed}, if $I=\overline{I}$, and $I$ is \emph{normal} if all powers
    of $I$ are integrally closed.
    \end{Definition}

   For an ideal $I$ in $R$, it is clear that $I\subseteq \overline{I}$, so $I$ is integrally closed if and only if $\overline{I}\subseteq I$. Further, if $I$ is a monomial ideal, then $\overline{I}$
    can be described as follows:
    \begin{Theorem}{\em (\cite[Theorem 1.4.2]{HH})}\label{integrally closed_2}
    Let $I \subset S$ be a monomial ideal. Then $\overline{I}$ is a monomial ideal generated by all monomials $f \in S$ for which there exists an integer $k$ such that $f^k \in I^k$.
    \end{Theorem}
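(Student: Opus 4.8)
The plan is to sandwich $\overline{I}$ between an explicitly described monomial ideal from both sides. Write $J$ for the ideal of $S$ generated by all monomials $f$ for which some power satisfies $f^k\in I^k$; the target is the equality $\overline{I}=J$. The inclusion $J\subseteq\overline{I}$ is immediate: if $f$ is a monomial with $f^k\in I^k$, then $f^k+(-f^k)=0$ is an equation of integral dependence of the shape in Definition \ref{integrally closed_1}, with the intermediate coefficients taken to be $0$ and $-f^k\in I^k$, so $f\in\overline{I}$; since $\overline{I}$ is an ideal, all of $J$ lies in $\overline{I}$.

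Next I would prove that $\overline{I}$ is itself a monomial ideal, which is what makes the reverse inclusion tractable. For $\lambda=(\lambda_1,\dots,\lambda_n)\in(\KK^{\times})^n$ the assignment $x_i\mapsto\lambda_i x_i$ defines a $\KK$-algebra automorphism $\phi_\lambda$ of $S$ that fixes every monomial ideal, so $\phi_\lambda(I)=I$; as integral closure is preserved under automorphisms, $\phi_\lambda(\overline{I})=\overline{I}$. Over an infinite field, an ideal invariant under all the $\phi_\lambda$ must be monomial: given $f=\sum_\alpha a_\alpha x^\alpha\in\overline{I}$, a genericity (Vandermonde) argument lets one solve for each term $a_\alpha x^\alpha$ as a $\KK$-linear combination of finitely many $\phi_\lambda(f)\in\overline{I}$, so every monomial term of $f$ lies in $\overline{I}$. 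For a finite field one base-changes to an infinite extension $\KK'$ and invokes that both monomiality and the formation of integral closure of ideals are compatible with the field extension $\KK\subseteq\KK'$.

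Granting that $\overline{I}$ is monomial, it remains to check $\overline{I}\subseteq J$ on monomial generators. Let $f$ be a monomial in $\overline{I}$ with integral equation $f^m+c_1f^{m-1}+\dots+c_m=0$, where $c_i\in I^i$. Comparing the coefficient of the monomial $f^m$ on both sides, the leading $f^m$ must be cancelled by the sum $\sum_{j\ge 1}c_jf^{m-j}$, so for some $j\in\{1,\dots,m\}$ a monomial term $u$ of $c_j$ satisfies $u\,f^{m-j}=f^m$, that is $u=f^{j}$. Because $I^j$ is a monomial ideal, every monomial term of $c_j\in I^j$ lies in $I^j$; hence $f^{j}=u\in I^{j}$ and $f\in J$. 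Together with the first inclusion this yields $\overline{I}=J$, and in particular $\overline{I}$ is the monomial ideal described in the statement.

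The main obstacle is the middle step, namely establishing that $\overline{I}$ is a monomial ideal. The torus-invariance and term-separation are transparent over an infinite field, but passing to an arbitrary field---justifying that being a monomial ideal and forming the integral closure both survive the base change along $\KK\subseteq\KK'$---is the one point requiring genuine care; by contrast, both the inclusion $J\subseteq\overline{I}$ and the final coefficient-comparison are routine manipulations of the integral-dependence equation.
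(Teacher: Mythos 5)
The paper itself contains no proof of this statement: it is quoted verbatim, with citation, as Theorem 1.4.2 of Herzog--Hibi \cite{HH}, so there is no internal argument to compare yours against. Judged on its own terms, your proof is correct, and it is essentially the standard textbook argument for this fact. The inclusion $J\subseteq\overline{I}$ via the pure-power equation $f^k+(-f^k)=0$ is fine; the coefficient comparison for a monomial $f\in\overline{I}$ is also fine, since the monomials occurring in $c_jf^{m-j}$ are exactly the products $u f^{m-j}$ with $u$ in the support of $c_j$ (multiplication by a fixed monomial causes no cancellation), and the support of $c_j$ lies in $I^j$ because $I^j$ is a monomial ideal; and the torus-invariance plus Vandermonde argument correctly shows $\overline{I}$ is a monomial ideal when $\KK$ is infinite. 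The one thin point is the finite-field reduction: ``formation of integral closure of ideals is compatible with the field extension'' is stronger than what you need and is not obvious as stated. Fortunately only the trivial direction is required, and the step closes cleanly as follows: prove the full theorem over an infinite extension $\KK'$, set $S'=\KK'[x_1,\dots,x_n]$, and note $\overline{I}\subseteq\overline{IS'}\cap S$ (integral equations persist under base change). By the infinite-field case, $\overline{IS'}$ is generated by monomials $u$ --- which automatically lie in $S$, having coefficient $1$ --- satisfying $u^k\in (IS')^k\cap S=I^kS'\cap S=I^k$, the last equality by faithful flatness of $S\to S'$; hence $\overline{IS'}=JS'$ and $\overline{I}\subseteq JS'\cap S=J$. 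With that patch your argument is complete and matches the route taken in the cited sources.
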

    According to Theorem \ref{integrally closed_2}, we have another description of the integral closure of $I$:
   \[
   \overline{I}=(f\in S\mid  f \text{\ is a monomial  and $f^i \in I^i$ for some $i\geq 1$}).
\]

 Let $G$  be a simple graph with the vertex set $V(G)=[n]$ and the edge set $E(G)$, where  for  convention the notation $[n]$ denotes the
set $\{1,\ldots,n\}$. The \emph{neighbourhood} of a vertex $v$ in $G$ is defined as $N_G(v)=\{u\in V (G) : \{u,v\}\in  E(G)\}$ and
its degree, denoted by $\deg_G(v)$, is $|N_G(v)|$.

\medskip
For a monomial $u=x_1^{a_1}\cdots x_n^{a_n}\in S$, we denote by $\Gamma(u)$ the exponent vector
  $(a_1,\ldots,a_n)$ of $u$.  In this case, we can write $u$ as $u=\xb^{\ab}$ with $\ab=(a_1,\ldots,a_n)\in \mathbb{Z}_{+}^n$.
  Observe that there exists a bijection which takes a monomial $u=x_1^{a_1}\cdots x_n^{a_n}$ into
a  vector $(a_1,\ldots,a_n)$ in $\mathbb{Z}_{+}^n$, where $\mathbb{Z}_{+}^n$ is the set of those vectors $(a_1,\ldots,a_n)\in \mathbb{Z}^n$ with each $a_i\ge 0$. Similarly, if $A$ is a set of monomials in $S$ we set $\Gamma(A)=\{\Gamma(u): u\in A\}$. For a monomial ideal $I\subset S$, let  $\mathcal{G}(I)$ denote the minimal set of generators of its monomial. If $\mathcal{G}(I)=\{\xb^{{\bb}_1},\ldots, \xb^{{\bb}_m}\}$,   then
we denote the convex hull of $\Gamma(\mathcal{G}(I))$ by $\C(I)$, i.e., $\C(I)=\{\ab\in \mathbb{Q}_{+}^n \mid \ab\in conv({\bb}_1,\dots,{\bb}_m)\}=\{\ab=\sum\limits_{i=1}\limits^{m}{\lambda_i{\bb}_i} \mid \sum\limits_{i=1}\limits^{m}{\lambda_i}=1, \lambda_i \in \mathbb{Q}_{+}\}$, where $\mathbb{Q}_{+}$ is the set of all nonnegative rational numbers. We call  $\C(I)$  the {\em Newton polyhedron} of $I$.

  \begin{Lemma}{\em (\cite[Proposition 12.1.4]{V3})}\label{integrally closed_3}
   Let $I\subset S$ be a monomial ideal with $\mathcal{G}(I)=\{\xb^{{\bb}_1},\ldots, \xb^{{\bb}_m}\}$. Then $\overline{I}$ is generated by the
monomials  $\xb^{\ab}$, where $\ab=(\lceil a_1 \rceil,\dots,\lceil a_n \rceil)$ with $(a_1,\ldots,a_n)\in \C(I)$ and   each $\lceil a_i \rceil$ is the smallest integer $\ge a_i$.
    \end{Lemma}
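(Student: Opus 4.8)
The plan is to translate the membership criterion for $\overline{I}$ supplied by Theorem \ref{integrally closed_2} into an inequality between exponent vectors, and then read off the generating set by rounding up. First I would record the elementary description of the powers of a monomial ideal: since $\mathcal{G}(I)=\{\xb^{\bb_1},\ldots,\xb^{\bb_m}\}$, the power $I^k$ is generated by the monomials $\xb^{\bb_{i_1}+\cdots+\bb_{i_k}}$ with indices $i_1,\ldots,i_k\in\{1,\ldots,m\}$ allowed to repeat. By Theorem \ref{integrally closed_2}, a monomial $f=\xb^{\cb}$ lies in $\overline{I}$ precisely when $f^k=\xb^{k\cb}\in I^k$ for some $k\geq 1$, and by the previous sentence this holds exactly when $k\cb\geq \bb_{i_1}+\cdots+\bb_{i_k}$ componentwise for some choice of the $i_j$.

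The heart of the argument is then the equivalence
\[
\xb^{\cb}\in\overline{I}\quad\Longleftrightarrow\quad \cb\geq\ab \text{ componentwise for some } \ab\in\C(I),
\]
for $\cb\in\mathbb{Z}_{+}^n$. For the forward implication, dividing the inequality $k\cb\geq\sum_{j=1}^{k}\bb_{i_j}$ by $k$ exhibits $\cb$ as dominating $\ab:=\frac{1}{k}\sum_{j=1}^{k}\bb_{i_j}$, which is a rational convex combination of $\bb_1,\ldots,\bb_m$ and hence lies in $\C(I)$. Conversely, given $\cb\geq\ab=\sum_i\lambda_i\bb_i$ with $\lambda_i\in\mathbb{Q}_{+}$ and $\sum_i\lambda_i=1$, I would clear denominators: writing $\lambda_i=p_i/k$ with $p_i\in\mathbb{Z}_{+}$ and $\sum_i p_i=k$, the vector $k\ab=\sum_i p_i\bb_i$ is the exponent of a generator of $I^k$, and $k\cb\geq k\ab$ gives $\xb^{k\cb}\in I^k$, so $\xb^{\cb}\in\overline{I}$.

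It remains to pass from this characterization to the stated generating set, and this rounding step is where the only real subtlety lies. If $\xb^{\cb}\in\overline{I}$ with $\cb\in\mathbb{Z}_{+}^n$, choose $\ab\in\C(I)$ with $\cb\geq\ab$; since each $c_i$ is an integer with $c_i\geq a_i$, we get $c_i\geq\lceil a_i\rceil$, so $\cb\geq(\lceil a_1\rceil,\ldots,\lceil a_n\rceil)$. Moreover this rounded vector dominates $\ab\in\C(I)$, so by the equivalence above $\xb^{(\lceil a_1\rceil,\ldots,\lceil a_n\rceil)}\in\overline{I}$, and $\xb^{\cb}$ is a multiple of it. Thus every monomial of $\overline{I}$ is divisible by one of the monomials $\xb^{\ab}$ with $\ab=(\lceil a_1\rceil,\ldots,\lceil a_n\rceil)$ and $(a_1,\ldots,a_n)\in\C(I)$, while each such monomial belongs to $\overline{I}$; hence these monomials generate $\overline{I}$, as claimed. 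The point demanding care is precisely that the rounding $\ab\mapsto(\lceil a_1\rceil,\ldots,\lceil a_n\rceil)$ never leaves the region governing $\overline{I}$ (it dominates $\ab$ itself), which is what lets the finitely many relevant ceilings suffice even though the bounded polytope $\C(I)$ contains infinitely many rational points.
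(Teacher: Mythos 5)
Your proof is correct. Note, however, that the paper does not prove this lemma at all: it is quoted from \cite[Proposition 12.1.4]{V3}, so there is no internal proof to compare against, and what you have done is supply the missing argument from the one ingredient the paper does make available, namely Theorem \ref{integrally closed_2}. Your chain of equivalences is the standard proof of this proposition (essentially the one in \cite{V3} itself): $\xb^{k\cb}\in I^k$ if and only if $k\cb$ dominates a sum of $k$ generator exponents componentwise; dividing by $k$ produces a rational convex combination lying in $\C(I)$, and clearing denominators gives the converse. The rounding step, which is indeed the only delicate point, is handled correctly: for an integer vector $\cb$, the inequality $\cb\ge\ab$ forces $\cb\ge(\lceil a_1\rceil,\ldots,\lceil a_n\rceil)$, and this ceiling vector still dominates $\ab\in\C(I)$, hence its monomial lies in $\overline{I}$. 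This gives simultaneously that every claimed generator belongs to $\overline{I}$ and that every monomial of $\overline{I}$ is divisible by a claimed generator, which is exactly generation for monomial ideals. One small remark: you use that a monomial $g\in\overline{I}$ satisfies $g^k\in I^k$ for some $k$, which is slightly stronger than the literal statement of Theorem \ref{integrally closed_2} (that $\overline{I}$ is \emph{generated} by such monomials); this is harmless, both because the paper records the stronger form in the displayed formula following that theorem, and because it follows anyway: if $g=hf$ with $f^k\in I^k$, then $g^k=h^kf^k\in I^k$.
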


    \section{Integral closure of edge ideals of   edge-weighted graphs}

    In this section, we will give a characterization of weighted graphs whose edge ideals are integrally closed.

        \begin{Theorem}
    	\label{trivial}
    	Let $G_\omega=(V(G_\omega), E(G_\omega))$ be a weighted graph with at most one edge  having non-trivial weight, then $I(G_\omega)$  is integrally closed.
    \end{Theorem}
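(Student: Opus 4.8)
The plan is to verify the inclusion $\overline{I(G_\omega)}\subseteq I(G_\omega)$ directly from the Newton-polyhedron description in Lemma \ref{integrally closed_3}, after first disposing of the trivial case. If no edge of $G_\omega$ has weight greater than $1$, then $I(G_\omega)$ is a squarefree monomial ideal and is integrally closed by \cite[Theorem 1.4.6]{HH}, so I would assume there is exactly one heavy edge $e_0=\{a,b\}$ of weight $w\ge 2$. I then record the minimal generators of $I(G_\omega)$: the single heavy generator, with exponent vector $\bb_1=w(\eb_a+\eb_b)$, together with the light (squarefree) generators $\bb_\ell=\eb_{i_\ell}+\eb_{j_\ell}$ for $\ell\ge 2$, one for each edge $\{i_\ell,j_\ell\}$ of weight $1$. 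By Lemma \ref{integrally closed_3} it suffices to show that every generating monomial $\xb^{\ab}$ of $\overline{I(G_\omega)}$ lies in $I(G_\omega)$, where $\ab=\lceil\cb\rceil$ for some $\cb=\sum_{\ell}\lambda_\ell\bb_\ell\in\C(I(G_\omega))$ with $\lambda_\ell\ge 0$ and $\sum_\ell\lambda_\ell=1$.

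I would then split into two cases according to whether $\xb^{\ab}$ is divisible by one of the light generators. If it is, then $\xb^{\ab}\in I(G_\omega)$ and there is nothing to prove. The entire content of the argument is the remaining case, in which $\xb^{\ab}$ is divisible by no $\xb^{\bb_\ell}$ with $\ell\ge 2$; equivalently, for every light edge $\{i,j\}$ at least one of $a_i,a_j$ vanishes, i.e.\ $\supp(\ab)$ contains no light edge. The crux is then a short concentration argument: since $\ab=\lceil\cb\rceil\ge\cb$ componentwise, we have $\supp(\cb)\subseteq\supp(\ab)$. If some light coefficient $\lambda_\ell$ with $\ell\ge 2$ were positive, then $c_{i_\ell}\ge\lambda_\ell>0$ and $c_{j_\ell}\ge\lambda_\ell>0$, so both endpoints of the light edge $\{i_\ell,j_\ell\}$ would lie in $\supp(\ab)$, contradicting that $\supp(\ab)$ contains no light edge. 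Hence $\lambda_\ell=0$ for all $\ell\ge 2$, which forces $\lambda_1=1$ and $\cb=w(\eb_a+\eb_b)$; taking ceilings gives $a_a\ge w$ and $a_b\ge w$, so $\xb^{\ab}$ is divisible by the heavy generator $x_a^{w}x_b^{w}$ and again lies in $I(G_\omega)$. In both cases $\xb^{\ab}\in I(G_\omega)$, which proves $\overline{I(G_\omega)}\subseteq I(G_\omega)$.

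I expect the only delicate point to be the passage from ``$\xb^{\ab}$ is divisible by no light generator'' to the vanishing of all light coefficients $\lambda_\ell$; once that is established the conclusion is automatic, and everything else is immediate from Lemma \ref{integrally closed_3}. Conceptually, the hypothesis that at most one edge is heavy is exactly what makes the representing point $\cb$ collapse onto the single vector $\bb_1$, and it is precisely the presence of two or more heavy edges that could allow a convex combination to produce a genuinely new integral element not divisible by any generator.
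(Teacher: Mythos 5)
Your proof is correct and follows essentially the same route as the paper: both arguments start from the Newton-polyhedron description in Lemma \ref{integrally closed_3} and hinge on the observation that any positive coefficient $\lambda_\ell$ on a weight-one edge forces the squarefree generator $x_{i_\ell}x_{j_\ell}$ to divide $\xb^{\ab}$, leaving only the degenerate case $\lambda_1=1$ where the heavy generator itself divides. Your case split (by divisibility, arguing the contrapositive) is just a reorganization of the paper's split by the support of the coefficient vector $(\lambda_1,\ldots,\lambda_m)$.
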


    \begin{proof}
    	Let $E(G_\omega)=\{e_1,\ldots, e_m\}$, where $e_i=\{u_i,v_i\}$ and  $\omega_i=\omega{(e_i)}$. Without loss of generality, we assume that $\omega_1\geq 1$ and $\omega_i=1$ for all $i=2,\dots,m$.
    Let $\xb^{{\bb}_i}=x_{u_i}^{\omega_i}x_{v_i}^{\omega_i}$ for   $i=1,\ldots, m$, then
     ${\bb}_i=(0,\dots,0,\omega_i,0,\dots,0,\omega_i,0,\dots,0)$ where  $\omega_i$  are the $u_i$-th and $v_i$-th entries of ${\bb}_i$ respectively.
By Lemma \ref{integrally closed_3}, we have
  \[
     \mathcal{G}(\overline{I(G_\omega)})=\{\xb^{\ab}\mid \ab=(\lceil a_1 \rceil,\dots,\lceil a_n \rceil) \ \text{with\ } (a_1,\ldots,a_n)\in  \C(I(G_\omega))\}.
\]
Let $\xb^{\ab}\in \mathcal{G}(\overline{I(G_\omega)})$ with  $\ab=(\lceil a_1 \rceil,\dots,\lceil a_n \rceil)$ satisfying $(a_1,\ldots,a_n)=\sum\limits_{i=1}\limits^{m}{\lambda_i{\bb}_i}$ with
$\sum\limits_{i=1}\limits^{m}{\lambda_i}=1$ and $\lambda_i \in \mathbb{Q}_{+}$.
If $\lambda_{i}=1$ and $\lambda_{j}=0$ for any $j\in [m]$ with $j\ne i$, then $\xb^{\ab}=\xb^{\lambda_i{\bb}_i}\in I(G_\omega)$.
If there exists some $t\ge 2$  such that  $\lambda_{i_1},\ldots, \lambda_{i_t}>0$. Let $i_1<\cdots<i_t$, then $i_2\ge 2$. Since  $\omega_i=1$ for each $i=2,\dots,m$,  $\xb^{{\bb}_{i_2}}|\xb^{\ab}$. It follows that $\xb^{\ab}\in I(G_\omega)$.
\end{proof}

\begin{Lemma}
\label{subgraph}
 Let $G_\omega$ be a weighted graph and $H_\omega$  its induced subgraph. If for some $k\in \mathbb{N}$, $I(G_\omega)^{k}$ is integrally closed, then $I(H_\omega)^{k}$ is  also integrally closed.
    \end{Lemma}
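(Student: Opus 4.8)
The plan is to reduce the integral closedness of $I(H_\omega)^k$ to that of $I(G_\omega)^k$ by transporting the defining integrality relations between the two ideals and then exploiting the fact that $H_\omega$ is \emph{induced}. Write $A=V(H)\subseteq V(G)$. Since $I(H_\omega)^k$ is a monomial ideal, so is $\overline{I(H_\omega)^k}$ by Theorem~\ref{integrally closed_2}; hence it suffices to show that every minimal monomial generator $f$ of $\overline{I(H_\omega)^k}$ already lies in $I(H_\omega)^k$. First I would record that each such $f$ is supported on $A$, i.e. involves only the variables $x_i$ with $i\in A$: indeed the generators of $I(H_\omega)^k$ are products of $k$ generators of $I(H_\omega)$, hence supported on $A$, so by Lemma~\ref{integrally closed_3} (the Newton polyhedron description) the same holds for the generators of $\overline{I(H_\omega)^k}$.

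Next I would pass from $H_\omega$ to $G_\omega$. Because $H_\omega$ is an induced subgraph, every edge of $H$ is an edge of $G$ carrying the same weight, so each generator of $I(H_\omega)$ is a generator of $I(G_\omega)$; thus $I(H_\omega)\subseteq I(G_\omega)$ and therefore $I(H_\omega)^{ks}\subseteq I(G_\omega)^{ks}$ for all $s$. Fixing a generator $f$ as above, Theorem~\ref{integrally closed_2} gives an integer $s\ge 1$ with $f^s\in (I(H_\omega)^k)^s=I(H_\omega)^{ks}\subseteq I(G_\omega)^{ks}=(I(G_\omega)^k)^s$. Applying Theorem~\ref{integrally closed_2} in the other direction yields $f\in\overline{I(G_\omega)^k}$, and the hypothesis that $I(G_\omega)^k$ is integrally closed then forces $f\in I(G_\omega)^k$.

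Finally I would descend back to $H_\omega$. Since $f\in I(G_\omega)^k$, I can write $f=m\,g_1\cdots g_k$, where $m$ is a monomial and each $g_j=x_{u_j}^{\omega(e_j)}x_{v_j}^{\omega(e_j)}$ is a generator of $I(G_\omega)$ attached to an edge $e_j=\{u_j,v_j\}\in E(G)$. Here is the crucial point: because $f$ is supported on $A$ and each $g_j$ divides $f$, every variable occurring in $g_j$ lies in $A$, so $u_j,v_j\in A$. As $H_\omega$ is induced, $e_j\in E(H)$ with the same weight, whence $g_j\in\mathcal{G}(I(H_\omega))$; moreover $m=f/(g_1\cdots g_k)$ is again a monomial supported on $A$. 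Thus $f=m\,g_1\cdots g_k\in I(H_\omega)^k$, which is exactly what we wanted.

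I expect the only delicate step to be the descent in the last paragraph, where the induced-subgraph hypothesis is indispensable: it is precisely what guarantees that an edge of $G$ with both endpoints in $A$ is an edge of $H$ carrying the same weight, so that the factors $g_j$ are genuine generators of $I(H_\omega)$ rather than merely of $I(G_\omega)$. The bookkeeping about supports—ensuring that $f$, and hence each $g_j$ and the cofactor $m$, involve only variables from $A$—is the other point requiring care, and it is settled once and for all by the Newton polyhedron description invoked in the first paragraph.
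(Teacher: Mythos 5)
Your proof is correct and follows essentially the same route as the paper's: show that the minimal generators of $\overline{I(H_\omega)^k}$ are supported on $V(H_\omega)$, pass to $\overline{I(G_\omega)^k}$ and use the hypothesis to conclude $f\in I(G_\omega)^k$, then use the induced-subgraph property to recognize each edge factor as a generator of $I(H_\omega)$ and descend back to $I(H_\omega)^k$. The only cosmetic differences are that you justify the support claim via the Newton polyhedron description of Lemma~\ref{integrally closed_3}, whereas the paper argues by contradiction with the minimality of $f$ using Theorem~\ref{integrally closed_2}, and that you derive the containment $\overline{I(H_\omega)^k}\subseteq\overline{I(G_\omega)^k}$ directly from Theorem~\ref{integrally closed_2} rather than citing \cite{HS}.
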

\begin{proof}
For any monomial $f \in \mathcal{G}(\overline{I(H_\omega)^{k}})$,  we first prove that if $x_u$ divides $f$
 then $u$ is  in $V(H_\omega)$. Indeed, by the choice of $f$
one has $f^s\in  I(H_\omega)^{sk}$ for some  integer $s\ge 1$ by Theorem \ref{integrally closed_2}. So we can write   $f^{s}=h\prod\limits_{i=1}^{sk}(x_{u_i}x_{v_i})^{\omega(e_i)}$ for some monomial   $h$ and $e_{i}=\{u_i,v_i\}\in E(H_\omega)$ for $i=1,\ldots, sk$.
Since  $x_u|f$, we have   $x_u^{s}|f^{s}$. If  $u\notin V(H_\omega)$, then $u\cap e_i=\emptyset$ for $i=1,\ldots, sk$. This forces that $x_u^{s}|h$.  Let $h=x_u^{s}h_1$, then $f^{s}=x_u^{s}h_1\prod\limits_{i=1}^{sk}(x_{u_i}x_{v_i})^{\omega(e_i)}$. So $(f/x_u)^s=h_1\prod\limits_{i=1}^{sk}(x_{u_i}x_{v_i})^{\omega(e_i)}$, which implies  that $f/x_u \in \overline{I(H_\omega)^{k}}$  by Theorem \ref{integrally closed_2}. This contradicts the fact that $f \in \mathcal{G}(\overline{I(H_\omega)^{k}})$.

Since $f \in \mathcal{G}(\overline{I(H_\omega)^{k}})$, one has $f\in  \overline{I(G_\omega)^{k}}$ by \cite[Remark 1.1.3]{HS}. So $f\in I(G_\omega)^{k}$, since $I(G_\omega)^{k}$ is integrally closed.
 It follows that $f=h\prod\limits_{i=1}^{k}(x_{u_i}x_{v_i})^{\omega(e_i)}$ for some monomial   $h$ and $e_{i}=\{u_i,v_i\}\in E(G_\omega)$ for $i=1,\ldots, k$.
By  the above proof, we get $u_i,v_i\in V(H_\omega)$ and $e_{i}=\{u_i,v_i\}\in E(H_\omega)$.   Consequently, $f\in I(H_\omega)^{k}$. This completes our proof.
 \end{proof}

  \begin{Remark}
\label{induced graph}
 Let $G_\omega$ be a weighted graph and $H_\omega$ be its  induced subgraph. If $I(G_\omega)$ is
 normal then $I(H_\omega)$ is also  normal.
\end{Remark}

The next lemma gives a list of weighted  graphs which are not integrally closed.
    \begin{Lemma} 		
    	\label{3subgraph}
   Let $G_\omega$ be a non-trivially weighted graph, such that all of its edges have non-trivial weights.
  \begin{enumerate}
    \item \label{3subgraph-1}If $G_\omega=P_\omega^3$ is a path of length $2$, then $I(G_\omega)\neq \overline{I(G_\omega)}$.
    \item If $G_\omega=P_\omega^2\sqcup P_\omega^2$ is a disjoint union of two paths $P_\omega^2$,  then $I(G_\omega)\neq \overline{I(G_\omega)}$.
    \item If $G_\omega=C_\omega^3$ is a $3$-cycle, then $I(G_\omega)\neq \overline{I(G_\omega)}$.
\end{enumerate}
\end{Lemma}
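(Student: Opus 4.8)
The claim is that each of these three small weighted graphs fails to be integrally closed, so for each case the plan is to exhibit an explicit monomial $f$ lying in $\overline{I(G_\omega)}\setminus I(G_\omega)$. By Theorem~\ref{integrally closed_2} it suffices to produce an $f$ together with an integer $i\ge 1$ such that $f^i\in I(G_\omega)^i$, while checking directly that $f\notin I(G_\omega)$ (i.e. $f$ is not divisible by any generator $x_u^{\omega(e)}x_v^{\omega(e)}$). Equivalently, using Lemma~\ref{integrally closed_3}, one can locate a lattice point $\ab=(\lceil a_1\rceil,\dots,\lceil a_n\rceil)$ arising from a nontrivial convex combination of the exponent vectors ${\bb}_i$ in the Newton polyhedron $\C(I(G_\omega))$ that is not already divisible by any ${\bb}_i$. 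I would use the convex-hull description since it makes the search for $f$ systematic.

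\medskip

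\textbf{Case (1): $P_\omega^3$.} Label the path $1-2-3$ with weights $\omega_1=\omega(\{1,2\})\ge 2$ and $\omega_2=\omega(\{2,3\})\ge 2$ (both nontrivial by hypothesis). The generators are $x_1^{\omega_1}x_2^{\omega_1}$ and $x_2^{\omega_2}x_3^{\omega_2}$. I would take the midpoint combination $\lambda_1=\lambda_2=\tfrac12$, giving the vector $\tfrac12{\bb}_1+\tfrac12{\bb}_2=(\tfrac{\omega_1}{2},\ \tfrac{\omega_1+\omega_2}{2},\ \tfrac{\omega_2}{2})$, and set $f=x_1^{\lceil \omega_1/2\rceil}x_2^{\lceil(\omega_1+\omega_2)/2\rceil}x_3^{\lceil \omega_2/2\rceil}$. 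Then $f^2$ is visibly divisible by $(x_1^{\omega_1}x_2^{\omega_1})(x_2^{\omega_2}x_3^{\omega_2})\in I(G_\omega)^2$, so $f\in\overline{I(G_\omega)}$. The point requiring care is showing $f\notin I(G_\omega)$: one must check that neither generator divides $f$, i.e. that $\lceil \omega_1/2\rceil<\omega_1$ or $\lceil(\omega_1+\omega_2)/2\rceil<\omega_1$ fails appropriately. Because $\omega_1,\omega_2\ge 2$, the $x_1$-exponent $\lceil\omega_1/2\rceil<\omega_1$ blocks divisibility by the first generator and the $x_3$-exponent $\lceil\omega_2/2\rceil<\omega_2$ blocks the second, so $f$ is genuinely outside $I(G_\omega)$.

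\medskip

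\textbf{Cases (2) and (3).} The same template applies. For $P_\omega^2\sqcup P_\omega^2$ with edges $\{1,2\}$ and $\{3,4\}$ of weights $\omega_1,\omega_2\ge 2$, I would take $\lambda_1=\lambda_2=\tfrac12$ and set $f=x_1^{\lceil\omega_1/2\rceil}x_2^{\lceil\omega_1/2\rceil}x_3^{\lceil\omega_2/2\rceil}x_4^{\lceil\omega_2/2\rceil}$; again $f^2$ is divisible by the product of the two generators, and since each variable appears in $f$ to a power $\lceil\omega_i/2\rceil<\omega_i$, no single generator divides $f$. For $C_\omega^3$ on vertices $1,2,3$ with edge weights $\omega_{12},\omega_{13},\omega_{23}\ge 2$, I would average all three generators with $\lambda_i=\tfrac13$, producing a point whose $j$-th coordinate is $\tfrac13$ times the sum of the two edge-weights incident to vertex $j$; taking ceilings gives a monomial $f$ with $f^3$ divisible by the product of all three generators, hence $f\in\overline{I(G_\omega)}$. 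The obstacle, here as before, is verifying $f\notin I(G_\omega)$, and this is where the nontriviality assumption $\omega\ge 2$ is essential: one checks that for each edge at least one of its two endpoint-exponents in $f$ is strictly smaller than the edge weight, so no generator divides $f$. I expect the triangle case to be the most delicate of the three, since each vertex is incident to two edges and one must argue carefully that the averaged-and-rounded exponents still drop below every individual edge weight.
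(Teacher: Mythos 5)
Your cases (1) and (2) are correct and are essentially the paper's own argument in a different normalization: the paper uses the witnesses $x_1^{\omega_1-1}x_2^{\omega_1+\omega_2}x_3^{\omega_2-1}$ and $x_1^{\omega_1-1}x_2^{\omega_1-1}x_3^{\omega_3-1}x_4^{\omega_3-1}$, whose squares land in $I(G_\omega)^2$ for the same reason your rounded midpoints do, and the hypothesis $\omega_i\ge 2$ blocks divisibility by each generator exactly as you argue.

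Case (3), however, contains a genuine gap, and it is not one you can close by "arguing carefully": the symmetric witness with $\lambda_1=\lambda_2=\lambda_3=\tfrac13$ can simply lie inside $I(G_\omega)$. Take all three weights equal to $2$. Your averaged point is $\left(\tfrac43,\tfrac43,\tfrac43\right)$, whose ceiling gives $f=x_1^2x_2^2x_3^2$, and this is divisible by the generator $x_1^2x_2^2$, so $f\in I(G_\omega)$ and witnesses nothing. The same failure occurs with unequal weights: for $(\omega_{12},\omega_{13},\omega_{23})=(2,10,10)$ you get $f=x_1^4x_2^4x_3^7$, again divisible by $x_1^2x_2^2$. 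The defect is structural: the two endpoint exponents of a light edge are inflated in the three-way average by the contributions of the other edges, so they can exceed that edge's weight, and then that generator divides $f$. The paper avoids this by never averaging all three generators; it applies the two-edge (path) trick to a pair of edges of the triangle sharing a vertex, producing a witness such as $h=x_1^{\omega_3-1}x_2^{\omega_2-1}x_3^{\omega_2+\omega_3}$, together with a case distinction on the weights ($\omega_1>\omega_2-1$ or not) that determines \emph{which} pair of edges to use, precisely so that the remaining third generator also fails to divide $h$. To repair your proof of (3) you would need the same kind of weight-comparison case analysis; as written, your construction does not produce an element of $\overline{I(G_\omega)}\setminus I(G_\omega)$.
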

    \begin{proof} (1) Let  $V(G_\omega)=[3]$ and $E(G_\omega)=\{\{1,2\},\{2,3\}\}$, then $\mathcal{G}(I(G_\omega))=\{x_1^{\omega_1}x_2^{\omega_1},\\
    x_2^{\omega_2}x_3^{\omega_2}\}$ with each $\omega_i=\omega(\{i,i+1\})$. Choose  $f=x_1^{\omega_1-1}x_2^{\omega_1+\omega_2}x_3^{\omega_2-1}$, then  $f\notin I(G_\omega)$, but $f^2=x_1^{2\omega_1-2}x_2^{2\omega_1+2\omega_2}x_3^{2\omega_2-2}=(x_1^{\omega_1}x_2^{\omega_1})(x_2^{\omega_2}x_3^{\omega_2})x_2^{\omega_1+\omega_2}x_1^{\omega_1-2}x_3^{\omega_2-2}
    \in I(G_\omega)^2$. This means that $f\in \overline{I(G_\omega)}$ by Theorem \ref{integrally closed_2}.

   (2) Let $V(G_\omega)=[4]$ and $E(G_\omega)=\{\{1,2\},\{3,4\}\}$,  then $\mathcal{G}(I(G_\omega))=\{x_1^{\omega_1}x_2^{\omega_1},x_3^{\omega_3}x_4^{\omega_3}\}$, where $\omega_1=\omega(\{1,2\})$ and
   $\omega_3=\omega(\{3,4\})$. Choose $g=x_1^{\omega_1-1}x_2^{\omega_1-1}x_3^{\omega_3-1}x_4^{\omega_3-1}$, then $g\notin I(G_\omega)$, but $g^2=x_1^{2\omega_1-2}x_2^{2\omega_1-2}x_3^{2\omega_3-2}x_4^{2\omega_3-2}=(x_1^{\omega_1}x_2^{\omega_1})(x_3^{\omega_3}x_4^{\omega_3})(x_1x_2)^{\omega_1-2}(x_3x_4)^{\omega_3-2}\\
   \in I(G_\omega)^2$. This implies that
   $g\in \overline{I(G_\omega)}$ by Theorem \ref{integrally closed_2}.

   (3) Let $V(G_\omega)=[3]$ and $E(G_\omega)=\{\{1,2\},\{2,3\},\{3,1\}\}$, then $\mathcal{G}(I(G_\omega))=\{x_1^{\omega_1}x_2^{\omega_1},x_2^{\omega_2}x_3^{\omega_2},x_3^{\omega_3}x_1^{\omega_3}\}$, where $\omega_i=\omega(\{i,i+1\})$ and $i+1\equiv j \text{ mod\ } 3$ with $0<j\le 3$ for $i=1,2,3$.
If $\omega_1>\omega_2-1$, we choose $h=x_1^{\omega_3-1}x_2^{\omega_2-1}x_3^{\omega_2+\omega_3}$, It is clear that  $h\notin I(G_\omega)$, but
    $h^2=(x_1^{\omega_3}x_3^{\omega_3})(x_2^{\omega_2}x_3^{\omega_2})x_1^{\omega_3-2}x_2^{\omega_2-2}x_3^{\omega_3+\omega_2}\in I(G_\omega)^2$. Otherwise, we choose $h=x_1^{\omega_1+\omega_3}x_2^{\omega_1-1}x_3^{\omega_3-1}$. In this case, we get that  $h\notin I(G_\omega)$, but $h^2=(x_1^{\omega_1}x_2^{\omega_1})(x_1^{\omega_3}x_3^{\omega_3})x_1^{\omega_1+\omega_3}x_2^{\omega_1-2}x_3^{\omega_3-2}\in I(G_\omega)^2$.
Therefore, by Theorem \ref{integrally closed_2}, we get that $h\in \overline{I(G_\omega)}$.
    \end{proof}

 \begin{Corollary}
     \label{not closed}
      Let  $G_\omega$ be a weighted  graph. If  $G_\omega$ contains one of the three graphs described in Lemma \ref{3subgraph} as an induced subgraph, then $I(G_\omega)$ is not integrally closed.
    \end{Corollary}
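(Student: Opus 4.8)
The plan is to reduce the statement to the combination of two earlier results, namely Lemma~\ref{subgraph} and Lemma~\ref{3subgraph}, and argue by contraposition. Suppose $G_\omega$ contains one of the three graphs $H_\omega$ listed in Lemma~\ref{3subgraph} as an induced subgraph. I want to show $I(G_\omega)\neq\overline{I(G_\omega)}$, i.e. $I(G_\omega)$ is not integrally closed. The natural contrapositive is: if $I(G_\omega)$ \emph{were} integrally closed, then so would be $I(H_\omega)$ for every induced subgraph $H_\omega$; but Lemma~\ref{3subgraph} tells us each of the three listed $H_\omega$ has $I(H_\omega)\neq\overline{I(H_\omega)}$, a contradiction.

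Concretely, first I would invoke Lemma~\ref{subgraph} in the special case $k=1$. That lemma says that if $I(G_\omega)^{k}$ is integrally closed then $I(H_\omega)^{k}$ is integrally closed for any induced subgraph $H_\omega$; taking $k=1$ gives exactly the statement that integral closedness of $I(G_\omega)$ descends to integral closedness of $I(H_\omega)$. So, assuming for contradiction that $I(G_\omega)$ is integrally closed, I apply this with $H_\omega$ equal to whichever of $P_\omega^3$, $P_\omega^2\sqcup P_\omega^2$, or $C_\omega^3$ is the prescribed induced subgraph, and conclude that $I(H_\omega)$ is integrally closed.

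Second, I would appeal directly to Lemma~\ref{3subgraph}, which exhibits for each of these three weighted graphs an explicit monomial $f$ (respectively $g$, $h$) lying in $\overline{I(H_\omega)}$ but not in $I(H_\omega)$, witnessing $I(H_\omega)\neq\overline{I(H_\omega)}$. This contradicts the conclusion of the previous step, and the contradiction forces $I(G_\omega)$ to be not integrally closed, as desired. One point requiring a little care is the hypothesis of Lemma~\ref{3subgraph}, which assumes that all edges of the subgraph carry non-trivial weight; since the three graphs are taken as \emph{induced} subgraphs and the induced-subgraph notion here preserves edge weights, the non-trivial weights of the edges of the listed graphs are inherited unchanged, so the hypothesis is met automatically and no separate verification is needed.

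I do not expect any genuine obstacle here, since all the work has been done in the two cited lemmas; the only thing to get right is the logical packaging. The mild subtlety is simply making sure the specialization $k=1$ of Lemma~\ref{subgraph} is stated cleanly and that the induced-subgraph relationship (including weight preservation) is flagged, so that the hypotheses of Lemma~\ref{3subgraph} transfer verbatim. The whole argument is therefore a short contrapositive combining the descent property with the explicit counterexamples.
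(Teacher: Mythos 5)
Your proof is correct and follows essentially the same route as the paper: apply Lemma~\ref{3subgraph} to see that the induced subgraph $H_\omega$ has $I(H_\omega)\neq\overline{I(H_\omega)}$, then use Lemma~\ref{subgraph} (with $k=1$) in contrapositive form to conclude $I(G_\omega)$ is not integrally closed. The paper's proof is exactly this two-line combination, so there is nothing to change.
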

 \begin{proof}
    Let $H_\omega$ be an induced subgraph of $G_\omega$ as described  in Lemma \ref{3subgraph}, then $I(H_\omega)\neq\overline{I(H_\omega)}$ by Lemma \ref{3subgraph}. The desired result follows from  Lemma \ref{subgraph}.
    \end{proof}

    \begin{Theorem}
    \label{main}
    	Let $G_\omega$ be a weighted graph. Then $I(G_\omega)$ is  integrally closed if and only if  $G_\omega$ does not contain one of the three graphs described in Lemma \ref{3subgraph} as an induced subgraph.
    \end{Theorem}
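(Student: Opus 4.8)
The statement is a biconditional, and one direction is already available: if $G_\omega$ contains one of the three graphs of Lemma \ref{3subgraph} as an induced subgraph, then $I(G_\omega)$ fails to be integrally closed, which is exactly Corollary \ref{not closed}. So the plan is to prove the converse, namely that if $G_\omega$ contains none of the three configurations as an induced subgraph, then $\overline{I(G_\omega)} \subseteq I(G_\omega)$. By Lemma \ref{integrally closed_3} it suffices to take an arbitrary point $\ab = \sum_{e} \lambda_e \bb_e \in \C(I(G_\omega))$, a convex combination with $\sum_e \lambda_e = 1$ and $\lambda_e \ge 0$, where $\bb_e = \omega(e)(\eb_i + \eb_j)$ for $e = \{i,j\}$, and to prove that the monomial $\xb^{\lceil \ab \rceil}$ lies in $I(G_\omega)$. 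Equivalently, I must exhibit a single edge $\{u,v\} \in E(G_\omega)$ with $\lceil a_u \rceil \ge \omega(\{u,v\})$ and $\lceil a_v \rceil \ge \omega(\{u,v\})$, because then the corresponding generator divides $\xb^{\lceil\ab\rceil}$.

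The engine of the argument is a one-line positivity observation. Set $S := \{e : \lambda_e > 0\}$. If a vertex $w$ is an endpoint of some $e \in S$, then $a_w \ge \lambda_e\,\omega(e) > 0$, hence $\lceil a_w \rceil \ge 1$. Consequently, for \emph{any} light edge $\{u,v\}$ (weight $1$) both of whose endpoints are incident to edges of $S$, one gets $\lceil a_u\rceil, \lceil a_v \rceil \ge 1 = \omega(\{u,v\})$ at once, so $\xb^{\lceil\ab\rceil} \in I(G_\omega)$. The entire proof thus reduces to locating such a light edge, and this is precisely where the forbidden-subgraph hypothesis is used.

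I would then run a short case analysis on $S$. If $S$ contains a light edge, its two endpoints are $S$-incident and we are done. If $S = \{e\}$ is a single edge, then $\ab = \bb_e$ and $\xb^{\lceil\ab\rceil}$ is divisible by the generator $\xb^{\bb_e}$. The remaining case is $|S| \ge 2$ with every edge of $S$ heavy. Here I pick two edges $e, e' \in S$: either some such pair shares a vertex, in which case avoidance of configurations (1) and (3) forces the third side of the resulting triangle to be a light edge of $G_\omega$ whose endpoints are $S$-incident; or all edges of $S$ are pairwise disjoint, in which case avoidance of configuration (2) applied to two of them forces a cross edge to exist. A light cross edge finishes the argument directly, while a heavy cross edge shares a vertex with one of $e, e'$, so a second application of the ``(1) and (3)'' avoidance produces a light edge, and I then verify that both its endpoints remain incident to edges of $S$.

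I expect this last sub-case — two disjoint heavy edges joined by a heavy cross edge — to be the main obstacle, since it is the only place where the light edge produced is not a side of a triangle built directly on edges of $S$, so the bookkeeping of which endpoints are $S$-incident must be done carefully. Everything else is forced by the positivity observation together with the three avoidance conditions, and once this case is disposed of, the inclusion $\overline{I(G_\omega)} \subseteq I(G_\omega)$, and hence the theorem, follows.
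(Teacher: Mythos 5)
Your proof is correct and follows essentially the same route as the paper: necessity via Corollary \ref{not closed}, and sufficiency by using Lemma \ref{integrally closed_3} to reduce to exhibiting a weight-one edge both of whose endpoints are incident to edges with positive coefficients (hence have positive coordinates), which the three forbidden configurations guarantee. The only difference is bookkeeping: where the paper enumerates by figures the six possible induced subgraphs on $\{u_1,v_1\}\cup\{u_2,v_2\}$ for two heavy positive edges, you dispose of the same cases with a two-step triangle argument (shared vertex, then disjoint edges with a possibly heavy cross edge), which is tidier but not a different method.
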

 \begin{proof} Necessity  follows from  Corollary \ref{not closed}.
For sufficiency, suppose that $G_\omega$ does not contain any of the three graphs described in Lemma \ref{3subgraph} as its induced subgraph.
 Let $E(G_\omega)=\{e_1,\dots,e_m\}$ with each  $e_i=\{u_i, v_i\}$ and $\omega_i=\omega(e_i)$.
If $G_\omega$ has at most one edge with non-trivial weight, then $I(G_\omega)$ is  integrally closed by Theorem \ref{trivial}. Now suppose  that  $G_\omega$ has $p$ edges with non-trivial
 weights, where $p\ge 2$.
 Without loss of generality, we assume that  $\omega_i\ge 2$  for  $i=1,\ldots,p$ and $\omega_i=1$ for $i=p+1,\dots,m$.
 Set $\xb^{{\bb}_i}=x_{u_i}^{\omega_i}x_{v_i}^{\omega_i}$ for   $i=1,\ldots, m$, then the  exponent vector
     ${\bb}_i=(0,\dots,0,\omega_i,0,\dots,0,\omega_i,0,\dots,0)$,  where  $\omega_i$  are the $u_i$-th and $v_i$-th entries of ${\bb}_i$, respectively.
By Lemma \ref{integrally closed_3}, we have
\[
     \mathcal{G}(\overline{I(G_\omega)})=\{\xb^{\ab}\mid \ab=(\lceil a_1 \rceil,\dots,\lceil a_n \rceil) \ \text{with\ } (a_1,\ldots,a_n)\in  \C(I(G_\omega))\}.
\]
Let $\xb^{\ab}\in \mathcal{G}(\overline{I(G_\omega)})$ with  $\ab=(\lceil a_1 \rceil,\dots,\lceil a_n \rceil)$ satisfying
 \begin{align}
(a_1,\ldots,a_n)=\sum\limits_{i=1}^{m}{\lambda_i{\bb}_i}\ \text{ with \ }
\sum\limits_{i=1}^{m}{\lambda_i}=1\  \text{ and \ } \lambda_i \in \mathbb{Q}_{+}.\label{eqn:SES-1}
 \end{align}
We will prove that $\xb^{\ab }\in I(G_\omega)$. We distinguish into the following two cases:

(i) If  $\lambda_{i}=1$ and $\lambda_{\ell}=0$ for any $\ell\in [m]$ with $\ell\ne i$ in the above expression (\ref{eqn:SES-1}) of $(a_1,\ldots,a_n)$,  then $\xb^{\ab }=\xb^{{\bb}_i}\in I(G_\omega)$.

(ii) If $\lambda_{i_1},\ldots, \lambda_{i_t}>0$ with  $t\ge 2$  in the  expression (\ref{eqn:SES-1}) of $(a_1,\ldots,a_n)$. In this case,
we consider the following two cases:

(a) If $\lambda_{i_\ell}>0$ with $i_\ell>p$, then  $\xb^{{\bb}_{i_\ell}}|\xb^{\ab}$, since $\omega_i=1$ for $i=p+1,\dots,m$. This implies that $\xb^{\ab}\in I(G_\omega)$.

(b)  If $\{i_1,\dots,i_t\} \subseteq \{1,\dots,p\}$. Without loss of generality, we can assume that $\lambda_i>0$ for all $i\in [t]$.
In this case, let $H_\omega$ be an induced subgraph of $G_\omega$  on the  set $A$, where $A=\{u_1,v_1\}\cup \{u_2,v_2\}$.

If $|E(H_\omega)|=2$, then $H_\omega=P_\omega^3$ or $H_\omega=P_\omega^2\sqcup P_\omega^2$ is a disjoint union of two paths $P_\omega^2$. In both cases, every edge of $H_\omega$ has non-trivial weight, which  contradicts the assumption that $G_\omega$ does not contain $P_\omega^3$ or $P_\omega^2\sqcup P_\omega^2$  as its induced subgraph. Consequently,  $|E(H_\omega)|\ge 3$.
 $H_\omega$ can  be only one of the following six cases:

	\vspace{0.7cm}
	\begin{center}
		\begin{tikzpicture}[thick,>=stealth]
		\setlength{\unitlength}{1mm}
		\setlength{\unitlength}{1mm}
		
		\thicklines
		
		\put(-105,92){$u_1$}
		\put(-100,90){\circle*{1.5}}
		
		\put(-79,91){$u_2$}
		\put(-80,90){\circle*{1.5}}
		
		\put(-88,69){$v_1=v_2$}
		\put(-90,70){\circle*{1.5}}	
		\put(-99,80){$e_1$}
		\put(-83,80){$e_2$}
		
		\draw[solid](-10,9)--(-8,9);	
		\draw[solid](-10,9)--(-9,7);
	    \draw[solid](-8,9)--(-9,7);
	
	    \put(-55,70){$v_1$}
	    \put(-50,70){\circle*{1.5}}
	
	    \put(-28,70){$v_2$}
	    \put(-30,70){\circle*{1.5}}
	
	    \put(-52,92){$u_1$}
	    \put(-50,90){\circle*{1.5}}	
	
	    \put(-30,92){$u_2$}
	    \put(-30,90){\circle*{1.5}}	
	    \put(-54,80){$e_1$}
	    \put(-28,80){$e_2$}
	
	    \draw[solid](-5,9)--(-5,7);	
	    \draw[solid](-3,9)--(-3,7);
	    \draw[solid](-3,9)--(-5,9);

        \put(-5,92){$u_1$}
		\put(0,90){\circle*{1.5}}
		
		\put(20,92){$u_2$}
		\put(20,90){\circle*{1.5}}
		
		\put(-5,70){$v_1$}
		\put(0,70){\circle*{1.5}}
		
		\put(22,70){$v_2$}
		\put(20,70){\circle*{1.5}}
		\put(-5,80){$e_1$}
		\put(21,80){$e_2$}

		\draw[solid](0,9)--(2,9);	
		\draw[solid](2,7)--(2,9);
		\draw[solid](0,9)--(0,7);
		\draw[solid](0,7)--(2,7);
		
	\end{tikzpicture}
\end{center}

\vspace{0.2cm}

\hspace{0.5cm}(1) $C_\omega^3$ with $\omega_i \geq 2$ \hspace{1.5cm}(2) $P^4_\omega$ with $\omega_i \geq 2$ \hspace{1.5cm}(3) $C_\omega^4$ with $\omega_i \geq 2$

\hspace{1.2cm}for $i=1,2$ \hspace{3.0cm} for $i=1,2$ \hspace{2.7cm} for $i=1,2$

\vspace{1.0cm}

	\begin{center}
		\begin{tikzpicture}[thick,>=stealth]
		\setlength{\unitlength}{1mm}
		\setlength{\unitlength}{1mm}
		
		\thicklines

		\put(-105,70){$v_1$}
		\put(-100,70){\circle*{1.5}}
		
		\put(-78,70){$v_2$}
		\put(-80,70){\circle*{1.5}}
		
		\put(-102,92){$u_1$}
		\put(-100,90){\circle*{1.5}}	
		
		\put(-78,92){$u_2$}
		\put(-80,90){\circle*{1.5}}	
		\put(-104,80){$e_1$}
		\put(-79,80){$e_2$}
		
		\draw[solid](-10,9)--(-10,7);	
		\draw[solid](-8,9)--(-8,7);
		\draw[solid](-8,9)--(-10,9);
		\draw[solid](-8,7)--(-10,9);

        \put(-55,92){$u_1$}
		\put(-50,90){\circle*{1.5}}
		
		\put(-30,92){$u_2$}
		\put(-30,90){\circle*{1.5}}
		
		\put(-56,70){$v_1$}
		\put(-50,70){\circle*{1.5}}
		
		\put(-28,70){$v_2$}
		\put(-30,70){\circle*{1.5}}
		\put(-55,80){$e_1$}
		\put(-29,80){$e_2$}

		\draw[solid](-5,9)--(-3,9);	
		\draw[solid](-3,7)--(-3,9);
		\draw[solid](-5,9)--(-5,7);
		\draw[solid](-5,7)--(-3,7);
		\draw[solid](-5,9)--(-3,7);

        \put(-5,92){$u_1$}
		\put(0,90){\circle*{1.5}}
		
		\put(20,92){$u_2$}
		\put(20,90){\circle*{1.5}}
		
		\put(-5,70){$v_1$}
		\put(0,70){\circle*{1.5}}
		
		\put(21,70){$v_2$}
		\put(20,70){\circle*{1.5}}
		\put(-5,80){$e_1$}
		\put(21,80){$e_2$}

		\draw[solid](0,9)--(2,9);	
		\draw[solid](2,7)--(2,9);
		\draw[solid](0,9)--(0,7);
		\draw[solid](0,7)--(2,7);
		\draw[solid](0,9)--(2,7);
		\draw[solid](2,9)--(0,7);

		\end{tikzpicture}
	\end{center}

\vspace{0.2cm}

\hspace{0.1cm}(4) chordal graph with \hspace{1.0cm}(5) chordal graph  with \hspace{1.0cm}(6) complete graph  with

\hspace{0.5cm} $e_i \geq 2$ for $i=1,2$ \hspace{1.5cm} $e_i \geq 2$ for $i=1,2$ \hspace{1.9cm}$e_i \geq 2$ for $i=1,2$

\vspace{0.7cm}	

Claim:   In each of the six cases above, there exists some  $e\in E(H_\omega)$ such that $\omega(e)=1$.

If  $H_\omega$ is an induced subgraph of $G_\omega$, as shown in  case (1)  or case (6), and $\omega(e)\ge 2$ for all $e\in E(H_\omega)$, then $G_\omega$ has an induced subgraph $C_\omega^3$,  such that all of whose edges have non-trivial weights, which contradicts the hypothesis. Hence there exists some  $e\in E(H_\omega)$ such that $\omega(e)=1$.

If  $H_\omega$ is an induced subgraph of $G_\omega$, as shown in one of the  cases (2)-(5), and $\omega(e)\ge 2$ for all $e\in E(H_\omega)$, then $G_\omega$ has an induced path $P_\omega^3$,  such that all of its edges have non-trivial weights, a contradiction.

Without loss of generality, we can assume that $\omega(\{u_1,u_2\})=1$. Note that   $(a_1,\ldots,a_n)$  satisfies the  expression (\ref{eqn:SES-1}), we have  $a_{u_1}\ge \lambda_1\omega_1>0$ and $a_{u_2}\ge \lambda_2\omega_2>0$, where $a_{u_1}$ and $a_{u_2}$ are the $u_1$-th and $u_2$-th entries of  $(a_1,\ldots,a_n)$ respectively. It follows that $\lceil a_{u_1}\rceil\ge 1$ and $\lceil a_{u_2}\rceil\ge 1$, so $x_{u_1}x_{u_2}$  divides $\xb^{\ab }$. So $\xb^{\ab}\in I(G_\omega)$. This completes the proof.
\end{proof}

\section{Normality of edge ideals of some edge-weighted graphs   }
In this section, we will show that for a weighted star graph or a weighted path or a weighted cycle with  the  edge ideal  $I$, if  $I$ is integrally closed, then $I$ is normal.
First,  we   recall a key notion from \cite{HT}, which will be helpful in understanding the
integral closure of ideals.

Let $S=\KK[x_{1},\dots, x_{n}]$  be a  polynomial ring in $n$ variables over a field $\KK$ and $\xb^{\ab}=x_1^{a_1}\cdots x_n^{a_n}\in S$ be  a monomial with an exponent vector $\ab=(a_1,\ldots,a_n)$. Let $I\subset S$ be a monomial ideal with $\mathcal{G}(I)=\{\xb^{{\bb}_1},\ldots, \xb^{{\bb}_m}\}$.  We call the $n\times m$ matrix $M$, whose columns are exponent vectors ${\bb}_1,\ldots,{\bb}_m$, the \emph{exponent matrix} of $I$. We set
\begin{align*}
v_\mathbf{a}(I)&=\max\,\{\mathbf{1}^{m} \cdot \mathbf{y} \mid  M \cdot \mathbf{y} \leq \mathbf{a}\ \text{with}\  \mathbf{y} \in \mathbb{Z}_{+}^{m}\},\\
v_\mathbf{a}^\ast(I)&=\max\,\{\mathbf{1}^{m} \cdot \mathbf{y} \mid  M \cdot \mathbf{y} \leq \mathbf{a}\ \text{with}\  \mathbf{y} \in \mathbb{R}_{\geq 0}^{m}\},
\end{align*}
where $\mathbb{R}_{\geq 0}$ is the set of all non-negative real numbers.

\begin{Lemma}{\em (\cite[ Proposition 3.1]{T})}
	\label{normal}
Let $I\subset S$ be a monomial ideal.  Then
 \begin{enumerate}
\item  \label{normal-1}$\xb^{\ab} \in I^k$ if and only if $v_\mathbf{a}(I)\geq k$,
\item  \label{normal-2}$\xb^{\ab} \in \overline{I^k}$ if and only if $v_\mathbf{a}^\ast(I)\geq k$.
 \end{enumerate}
\end{Lemma}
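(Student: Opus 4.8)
The plan is to prove the two statements in turn, obtaining part (2) from part (1) together with a homogeneity argument that relates the integer program defining $v_\mathbf{a}(I)$ to its linear relaxation $v_\mathbf{a}^\ast(I)$.

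For part (1), the first step is to translate membership in $I^k$ into a divisibility condition. The ideal $I^k$ is generated by all products of $k$ (not necessarily distinct) generators of $I$; writing such a product as $\prod_{i=1}^m (\xb^{{\bb}_i})^{y_i}=\xb^{M\mathbf{y}}$ with $\mathbf{y}\in\mathbb{Z}_+^m$ and $\sum_i y_i=k$, one sees that $\xb^{\ab}\in I^k$ holds exactly when $\xb^{M\mathbf{y}}$ divides $\xb^{\ab}$ for some such $\mathbf{y}$, i.e. when there is $\mathbf{y}\in\mathbb{Z}_+^m$ with $\mathbf{1}^m\cdot\mathbf{y}=k$ and $M\mathbf{y}\leq\mathbf{a}$ componentwise. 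One implication is then immediate: if $\xb^{\ab}\in I^k$, the witnessing $\mathbf{y}$ certifies $v_\mathbf{a}(I)\geq k$. For the converse, suppose $v_\mathbf{a}(I)\geq k$ with optimal $\mathbf{y}$, so $\mathbf{1}^m\cdot\mathbf{y}\geq k$ and $M\mathbf{y}\leq\mathbf{a}$. Since $M$ has nonnegative entries, I would decrease the coordinates of $\mathbf{y}$ to obtain $\mathbf{y}'\leq\mathbf{y}$ with $\mathbf{1}^m\cdot\mathbf{y}'=k$; then $M\mathbf{y}'\leq M\mathbf{y}\leq\mathbf{a}$, so $\xb^{M\mathbf{y}'}$ is a product of $k$ generators dividing $\xb^{\ab}$, whence $\xb^{\ab}\in I^k$.

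For part (2), I would use the description of integral closure from Theorem \ref{integrally closed_2}: $\xb^{\ab}\in\overline{I^k}$ if and only if $(\xb^{\ab})^s=\xb^{s\mathbf{a}}\in(I^k)^s=I^{ks}$ for some integer $s\geq 1$. By part (1) this is equivalent to the existence of $s\geq 1$ with $v_{s\mathbf{a}}(I)\geq ks$, i.e. $v_{s\mathbf{a}}(I)/s\geq k$. The crux is then to identify $\sup_{s\geq 1} v_{s\mathbf{a}}(I)/s$ with $v_\mathbf{a}^\ast(I)$ and to show the supremum is attained. The inequality $v_{s\mathbf{a}}(I)/s\leq v_\mathbf{a}^\ast(I)$ follows from the homogeneity of the linear program, $v_{s\mathbf{a}}^\ast(I)=s\,v_\mathbf{a}^\ast(I)$ (feasible points for $\mathbf{a}$ scale to feasible points for $s\mathbf{a}$ and back), combined with $v_{s\mathbf{a}}(I)\leq v_{s\mathbf{a}}^\ast(I)$ since the integer program is a restriction of its relaxation. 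For the reverse inequality, the essential input is that the linear program has rational data, so its optimum $v_\mathbf{a}^\ast(I)$ is attained at a rational feasible point $\mathbf{y}^\ast\in\mathbb{Q}_+^m$ with $M\mathbf{y}^\ast\leq\mathbf{a}$; clearing denominators, I would pick $s$ with $s\mathbf{y}^\ast\in\mathbb{Z}_+^m$, which is integer-feasible for right-hand side $s\mathbf{a}$ and yields $v_{s\mathbf{a}}(I)\geq\mathbf{1}^m\cdot(s\mathbf{y}^\ast)=s\,v_\mathbf{a}^\ast(I)$. Hence $v_{s\mathbf{a}}(I)/s=v_\mathbf{a}^\ast(I)$ for this $s$, so the supremum equals $v_\mathbf{a}^\ast(I)$ and is achieved.

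Assembling these facts, $\xb^{\ab}\in\overline{I^k}$ holds iff some $s$ satisfies $v_{s\mathbf{a}}(I)/s\geq k$, which—because the supremum $v_\mathbf{a}^\ast(I)$ is attained—is equivalent to $v_\mathbf{a}^\ast(I)\geq k$, giving part (2). The hard part will be the attainment step: one must invoke that a linear program with rational coefficients attains its optimum at a rational vertex, so that clearing denominators produces a genuine integral feasible solution realizing $v_{s\mathbf{a}}(I)=s\,v_\mathbf{a}^\ast(I)$. Everything else reduces to the elementary dictionary between divisibility of monomials and componentwise inequalities of exponent vectors.
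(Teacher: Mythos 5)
Your proposal is correct, but note that the paper itself gives no proof of Lemma \ref{normal} at all: it is quoted verbatim from \cite[Proposition 3.1]{T}, so there is no internal argument to compare against. What you have written is essentially a self-contained reconstruction of the standard proof behind the cited result: part (1) is the elementary dictionary between divisibility of monomials and the integer program $M\mathbf{y}\le\ab$, $\mathbf{1}^m\cdot\mathbf{y}=k$ (your truncation step $\mathbf{y}'\le\mathbf{y}$ is valid because $M$ has nonnegative entries), and part (2) combines the power characterization of integral closure with the homogeneity $v_{s\ab}^\ast(I)=s\,v_\ab^\ast(I)$ and rational attainment of the linear optimum. Two small points you should make explicit to be airtight. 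First, when you invoke Theorem \ref{integrally closed_2} in the form ``$\xb^{\ab}\in\overline{I^k}$ iff $\xb^{s\ab}\in I^{ks}$ for some $s$,'' the theorem as stated only says $\overline{I^k}$ is \emph{generated} by monomials satisfying the power condition; the missing half-line is that any monomial multiple $f=hg$ of such a generator $g$ again satisfies $f^s=h^sg^s\in I^{ks}$. Second, the attainment of $v_\ab^\ast(I)$ at a rational point requires the feasible region $\{\mathbf{y}\in\mathbb{R}_{\ge0}^m: M\mathbf{y}\le\ab\}$ to be a (nonempty, bounded) polytope, which holds because every column $\bb_i$ of $M$ is a nonzero nonnegative integer vector (the generators are non-units); with that observed, the optimum sits at a vertex of a polytope cut out by integer inequalities, hence is rational, and your denominator-clearing step goes through. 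With these two remarks inserted, your argument is a complete and correct proof of the lemma that the paper only cites.
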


\begin{Remark}\label{Rounding}
	Let $k$ be  a positive integer.
\begin{enumerate}
\item \label{Rounding_1}If  $x+y \geq k$ with $x,y \in \mathbb{R}_{\geq 0}$, then  $\lceil x \rceil+\lfloor y \rfloor \geq k$; 	
\item \label{Rounding_2} If $x+y \leq k$ with $x,y \in \mathbb{R}_{\geq 0}$, then $\lceil x \rceil+\lfloor y \rfloor \leq k$.
 \end{enumerate}
  where $\lfloor y \rfloor$ is the  largest integer $\le y$.	
  \end{Remark}

\begin{proof}
(1) If  $x+y \geq k$, then $\lceil x \rceil+y \geq k$, i.e, $y\geq k-\lceil x \rceil$. Since  $k-\lceil x \rceil$ is an integer, we have  $\lfloor y \rfloor \geq k-\lceil x \rceil$,
i.e, $\lceil x \rceil+\lfloor y \rfloor \geq k$.

(2) If $x+y \leq k$, then $x+\lfloor y \rfloor \leq k$, i.e, $x \leq k-\lfloor y \rfloor$. It follows that $\lceil x \rceil \leq k-\lfloor y \rfloor$,  i.e, $\lceil x \rceil+\lfloor y \rfloor \leq k$.
\end{proof}

We now prove some of the main results of this section.
\begin{Theorem}
Let $G_\omega$ be a weighted star graph with $n$ vertices, and let $I=I(G_\omega)$ be its  edge ideal. If $I$ is integrally closed, then $I$ is normal.		
\end{Theorem}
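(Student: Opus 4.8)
The plan is to first pin down the weight structure using the classification of Theorem \ref{main}, and then to verify normality through the linear--programming description of $v_\mathbf{a}$ and $v_\mathbf{a}^\ast$ provided by Lemma \ref{normal}. The first observation is that in a star any two distinct edges share the central vertex, so the subgraph induced on the centre together with the two leaves is a path $P_\omega^3$. Hence, if two edges carried weight $\geq 2$, then $G_\omega$ would contain an induced $P_\omega^3$ all of whose edges have non-trivial weight, and Theorem \ref{main} (through Lemma \ref{3subgraph}(\ref{3subgraph-1})) would force $I$ to be not integrally closed. Therefore the hypothesis that $I$ is integrally closed already guarantees that at most one edge of $G_\omega$ has weight $\geq 2$. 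After relabelling, take $1$ to be the centre, let $\{1,2\}$ be the (possibly) heavy edge with weight $\omega_1$, and let $\{1,j\}$ have weight $1$ for $3\leq j\leq n$.

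If every edge is trivially weighted, then $I=x_1(x_2,\dots,x_n)$, whence $I^{k}=x_1^{k}(x_2,\dots,x_n)^{k}$; since $(x_2,\dots,x_n)$ is a monomial prime its powers are integrally closed, and multiplication by the fixed monomial $x_1^{k}$ preserves this, so $I$ is normal. The substantive case is $\omega_1\geq 2$, where $I=(x_1^{\omega_1}x_2^{\omega_1},\,x_1x_3,\dots,x_1x_n)$. Here I would invoke Lemma \ref{normal}: $I$ is normal exactly when, for every $\mathbf{a}\in\mathbb{Z}_+^n$ and every $k$, the inequality $v_\mathbf{a}^\ast(I)\geq k$ implies $v_\mathbf{a}(I)\geq k$. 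As $v_\mathbf{a}(I)\leq v_\mathbf{a}^\ast(I)$ and $v_\mathbf{a}(I)$ is an integer, it suffices to establish the sharp rounding identity $v_\mathbf{a}(I)=\lfloor v_\mathbf{a}^\ast(I)\rfloor$.

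To this end I would write the exponent matrix with the column $\mathbf{b}_1=\omega_1(\mathbf{e}_1+\mathbf{e}_2)$ together with the columns $\mathbf{e}_1+\mathbf{e}_j$ for $3\leq j\leq n$; the defining program maximises $y_1+\sum_{j\geq 3}y_j$ subject to $\omega_1y_1+\sum_{j\geq 3}y_j\leq a_1$, $\omega_1y_1\leq a_2$, and $0\leq y_j\leq a_j$. Since the multipliers $y_j$ $(j\geq 3)$ enter the objective and the first constraint with coefficient $1$, I would aggregate them into a single quantity $s=\sum_{j\geq 3}y_j$ ranging over $[0,A]$ with $A=\sum_{j\geq 3}a_j$, reducing everything to a two-variable program in $(y_1,s)$. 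The key point is that, because $\omega_1>1$, one unit of $s$ consumes less of the budget $a_1$ than one unit of $y_1$ while contributing equally to the objective; hence an optimal solution fills $s$ to its maximum before spending anything on $y_1$. Concretely, the real optimum is $v_\mathbf{a}^\ast(I)=A+\min(a_2,a_1-A)/\omega_1$ when $A<a_1$, and $v_\mathbf{a}^\ast(I)=a_1$ when $A\geq a_1$. Setting $y_j=a_j$ and $y_1=\lfloor\min(a_2,a_1-A)/\omega_1\rfloor$ produces a feasible integer point of value exactly $A+\lfloor\min(a_2,a_1-A)/\omega_1\rfloor=\lfloor v_\mathbf{a}^\ast(I)\rfloor$, where the floor distributes because $A$ is an integer (Remark \ref{Rounding} packages precisely this rounding). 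This gives $v_\mathbf{a}(I)\geq\lfloor v_\mathbf{a}^\ast(I)\rfloor$ and hence the desired identity.

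I expect the main obstacle to be the optimality analysis of the linear program: proving rigorously that loading all weight onto $s$ before $y_1$ is optimal in every regime (the three cases $A\geq a_1$, $a_2\leq a_1-A$, and $a_2> a_1-A$), and then checking that the proposed integer point is genuinely feasible against all three families of constraints. Once the two-variable optimum is determined, the rounding step is immediate and the reduction via Theorem \ref{main} is routine.
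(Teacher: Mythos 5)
Your proposal is correct, and its skeleton matches the paper's: both reduce via Theorem \ref{main} to the case of at most one non-trivially weighted edge, and both then work with the membership criterion of Lemma \ref{normal}. The difference lies in how the key step is organized. The paper never computes $v_\mathbf{a}^\ast(I)$; given $\xb^{\ab}\in\mathcal{G}(\overline{I^k})$ it takes a real feasible $\mathbf{y}$ of value $\geq k$ and runs a case analysis on $\mathbf{a}$ (some light-leaf exponent $a_i\geq k$; the sum of light-leaf exponents reaches $k$; or that sum $b$ is $<k$, in which case $a_1\geq\omega_1(k-b)$ and the centre exponent is $\geq b+\omega_1(k-b)$), in each case writing down an explicit product of $k$ generators dividing $\xb^{\ab}$. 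You instead prove the sharper rounding identity $v_\mathbf{a}(I)=\lfloor v_\mathbf{a}^\ast(I)\rfloor$ by solving the linear program in closed form ($v_\mathbf{a}^\ast=a_1$ when $A\geq a_1$, and $A+\min(a_2,a_1-A)/\omega_1$ when $A<a_1$, via the exchange argument that a unit of a light edge is never less efficient than a unit of the heavy one) and then exhibiting an integer point achieving the floor. Your route is more conceptual and yields a stronger, quantitative statement; the paper's route avoids having to verify LP optimality rigorously, at the cost of a longer case split. Two details you should repair when writing this up. First, your proposed integer point ($y_j=a_j$ for $j\geq 3$, $y_1=\lfloor\min(a_2,a_1-A)/\omega_1\rfloor$) is feasible only in the regime $A<a_1$; when $A\geq a_1$ it violates the constraint at the centre, and there you must instead take $y_1=0$ together with integers $y_j\leq a_j$ summing to exactly $a_1$ (possible because the $a_j$ are integers) --- this is precisely what the paper's cases (2)(i)--(ii) do by truncating the light exponents. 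Second, your treatment of the trivially weighted case quietly uses that $\overline{x_1^k J}=x_1^k\overline{J}$ for a monomial ideal $J$ and that powers of the prime $(x_2,\dots,x_n)$ are integrally closed; both are standard, but the paper sidesteps them by citing the literature, and you should either cite or prove them.
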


\begin{proof}Let $E(G_\omega)=\{e_1,\ldots,e_{n-1}\}$, where  $e_i=\{i,n\}$ and $\omega_i=\omega(e_i)$ for $i\in [n-1]$. Since $I$ is integrally closed, $G_\omega$ has at most one edge with non-trivial weight by Theorem \ref{main}. If $G_\omega$ is trivially weighted, then $I$ is normal by
\cite[Proposition 2.1 and Corollary 2.8]{AVV} and \cite[Proposition 2.1.2]{HSV}. Now we assume that $G_\omega$ has an edge with non-trivial weight.
In this case,  we can assume  by symmetry that $\omega_1\ge 2$ and $\omega_i=1$ for $i\in [n-1]$ with $i\ne 1$. We will prove that $\overline{I^k}=I^k$ for all $k\ge 2$.
Since $I^k\subseteq \overline{I^k}$ is always valid,  it suffices  to prove that  $\overline{I^k}\subseteq I^k$.

Let $\xb^{\ab}=x_1^{a_1}\cdots x_n^{a_n} \in \mathcal{G}(\overline{I^k})$, then $v_\mathbf{a}^\ast(I)\geq k$ by Lemma \ref{normal}(\ref{normal-2}). From the definition of $v_\mathbf{a}^\ast(I)$  it follows that there exists the
vector $\mathbf{y}=(y_1,y_2,\dots,y_{n-1})^T\in \mathbb{R}_{\geq 0}^{n-1}$ which  satisfies the following system of inequalities
\[
\begin{array}{cccc}
(1)\quad&
\left\{
\begin{aligned}
y_1+\cdots+y_{n-1} &\geq k, &\circled{1}\\
\omega_1y_1 &\leq  a_1, &\circled{2}\\
y_2 &\leq a_2,\\
&\vdots\\
y_{n-1} &\leq a_{n-1},\\
\omega_1y_1+y_2+\cdots+y_{n-1} &\leq a_n.&\circled{3}
\end{aligned}
\right.
\end{array}
\]
We distinguish between the following two cases:
	\begin{enumerate}
\item[(1)] If there exists some  $i\in [n-1]\backslash \{1\}$  such that $a_i \geq k$, then $\xb^{\ab}$ can be divisible by $(x_ix_n)^k$ from $\circled{1}$ and $\circled{3}$ in system (1),  which implies that $\xb^{\ab} \in I^k$.
\item[(2)] If  $a_i<k$ for all $i\in [n-1]\backslash \{1\}$. We consider the following three subcases:
\begin{enumerate}
\item[(i)] If there exists some $j\in [n-2]\backslash \{1\}$ such that
     $a_2+\cdots+a_{j+1}=k$, then in this case  $\xb^{\ab}$ can be divisible  by
    $(x_2x_n)^{a_2}(x_3x_n)^{a_3} \cdots (x_jx_n)^{a_j}(x_{j+1}x_n)^{b_{j+1}}$ where $b_{j+1}=k-(a_2+\cdots+a_j)$, so that $\xb^{\ab} \in I^k$.
\item[(ii)] If there exists some $j\in [n-2]\backslash \{1\}$ such that
     $a_2+\cdots+a_{j+1}>k$, then we choose the  maximum  $\ell$ such that $a_2+\cdots+a_{\ell}\le k$.
     In this case, $\xb^{\ab}$ can be divisible  by
    $(x_2x_n)^{a_2}(x_3x_n)^{a_3} \cdots (x_{\ell}x_n)^{a_{\ell}}(x_{\ell+1}x_n)^{b_{\ell+1}}$ where $b_{\ell+1}=k-(a_2+\cdots+a_{\ell})$, so  $\xb^{\ab} \in I^k$.
\item[(iii)] If $a_2+\cdots+a_{n-1}<k$.  In this case, let $b=a_2+\cdots+a_{n-1}$, then  $y_2+\cdots+y_{n-1} \le b<k$. It follows from $\circled{1}$ in system (1) that
		\begin{equation}
		y_1 \geq k-b.\label{eqn: inequality1}
		\end{equation}
Therefore  $a_1 \geq \omega_1y_1 \geq \omega_1(k-b)$ by $\circled{2}$ in system (1).
		By $\circled{3}$ in system (1) and the inequality (\ref{eqn: inequality1}), we get
		\begin{align*}
		a_n & \geq\omega_1y_1+y_2+\cdots+y_{n-1}\\
		& = y_1+\dots+y_{n-1}+(\omega_1-1)y_1\\
		& \geq k+(\omega_1-1)(k-b)\\
    &=b+\omega_1(k-b).
		\end{align*}
It  follows that $\xb^{\ab}$ is divisible  by $(x_2x_n)^{a_2}\cdots(x_{n-1}x_{n})^{a_{n-1}}(x_1^{\omega_1}x_n^{\omega_1})^{k-b}$, so  $\xb^{\ab} \in I^k$, since $(k-b)+a_2+\cdots+a_{n-1}=(k-b)+b=k$.\qedhere
\end{enumerate}
 \end{enumerate}
\end{proof}

\begin{Lemma}
	\label{divide} Let $n\ge 2$ be an integer and let
  $\xb^{\ab}=x_1^{a_1}\cdots x_n^{a_n}\in S$ be a monomial whose exponent vector  $\ab=(a_1,\ldots,a_n)$  satisfies one of the following two conditions:
    \begin{enumerate}
\item  \label{divide-1} $n\ge 3$, $a_j\geq a_{j-1}-a_{j-2}+\cdots+(-1)^{i-1}a_{j-i}+\cdots+(-1)^{j-2}a_{1}$ for each $j=2,\ldots,n-1$ and
$a_n \leq a_{n-1}-a_{n-2}+\cdots+(-1)^{i-1}a_{n-i}+\cdots+(-1)^{n-2}a_{1}$.
\item  \label{divide-2}  $n=2r$ and $a_{2i-1} \geq a_{2i}$ for each $i=1,\dots,r$.
 \end{enumerate}
Suppose that a vector $\mathbf{y}=(y_1,y_2,\dots,y_n)^T\in \mathbb{R}_{\geq 0}^{n}$  satisfies the following inequality system
  	\[
\begin{array}{cc}
(2)\quad&
	\left\{
	\begin{aligned}
	y_1  &\leq  a_1,\\
	y_1+y_2 &\leq a_2,\\
	y_2+y_3 &\leq a_3,\\
	&\vdots \\
	y_{n-2}+y_{n-1} &\leq a_{n-1},\\
	y_{n-1}+y_n &\leq a_n.
	\end{aligned}
\right.
\end{array}
\]
 Let $h=\lceil y_1+\dots+y_{n}\rceil$, then there exist at least $h$ monomials $e_1,\ldots,e_h\in \{x_1x_2,x_2x_3,\\
\dots,x_{n-1}x_n\}$ such that $\xb^{\ab}$ can be divisible by $\prod_{i=1}^{h}e_i$.
\end{Lemma}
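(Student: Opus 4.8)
The plan is to read the divisibility conclusion as an integer feasibility problem on the path $x_1-x_2-\cdots-x_n$, and to separate a condition-free linear-programming upper bound from the construction of an integral matching of the correct size, which is exactly where the hypotheses (1) and (2) enter.

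First I would reinterpret everything in terms of edge multiplicities. Writing $z_j$ for the number of times the monomial $x_jx_{j+1}$ is used ($1\le j\le n-1$), the product $\prod_{j=1}^{n-1}(x_jx_{j+1})^{z_j}$ divides $\xb^{\ab}$ precisely when $z_1\le a_1$, $z_{j-1}+z_j\le a_j$ for $2\le j\le n-1$, and $z_{n-1}\le a_n$ (compare the exponent of each $x_k$, using the convention $z_0=z_n=0$). Thus it suffices to produce nonnegative integers $z_1,\dots,z_{n-1}$ satisfying this ``integral $z$-system'' together with $\sum_{j=1}^{n-1}z_j\ge h$: once such $z$ are found, the multiset consisting of $z_j$ copies of $x_jx_{j+1}$ has total size $\ge h$ and its product divides $\xb^{\ab}$, so any $h$ of its members serve as $e_1,\dots,e_h$.

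The key preliminary observation, which uses neither (1) nor (2), is an averaging bound on $\sum_i y_i$. Summing exactly those inequalities of system $(2)$ whose index $j$ is congruent to $n$ modulo $2$, one checks that every $y_i$ occurs with coefficient exactly $1$ on the left-hand side: for each $i$ exactly one of the two indices $i,i+1$ has the parity of $n$, and $y_n$ appears only in the last inequality. Hence $\sum_{i=1}^{n}y_i\le\sum_{j\equiv n\,(2)}a_j$. Since $\sum_j z_j$ will be an integer, it therefore suffices to construct a feasible integral $z$ with $\sum_{j=1}^{n-1}z_j=\sum_{j\equiv n\,(2)}a_j$; then $\sum_j z_j\ge\sum_i y_i$ forces $\sum_j z_j\ge\lceil\sum_i y_i\rceil=h$. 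So the whole problem reduces to exhibiting an integral matching of the path attaining the value $\sum_{j\equiv n\,(2)}a_j$, and this is where the two hypotheses do their work. Under hypothesis (1) I would take the alternating partial sums $z_j=a_j-a_{j-1}+\cdots+(-1)^{j-1}a_1$ for $1\le j\le n-2$ and $z_{n-1}=a_n$: nonnegativity of $z_j$ is precisely the inequality for $a_j$ in (1), the recurrence $z_{j-1}+z_j=a_j$ makes all middle constraints hold with equality, the constraint at $j=n-1$ reduces to $a_n\le a_{n-1}-a_{n-2}+\cdots+(-1)^{n-2}a_1$ (the last inequality of (1)), and $z_1\le a_1$, $z_{n-1}\le a_n$ hold trivially; a short telescoping (each $z_i$ appears once among the relations $a_j=z_{j-1}+z_j$ with $j\equiv n\bmod 2$) gives $\sum_j z_j=\sum_{j\equiv n\,(2)}a_j$. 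Under hypothesis (2), with $n=2r$, I would instead use the vertex-disjoint edges $x_{2i-1}x_{2i}$, setting $z_{2i-1}=a_{2i}$ and $z_{2i}=0$: feasibility is immediate from $a_{2i-1}\ge a_{2i}$, and $\sum_j z_j=\sum_{i=1}^r a_{2i}=\sum_{j\ \mathrm{even}}a_j=\sum_{j\equiv n\,(2)}a_j$ since $n$ is even.

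The only genuinely delicate point is the construction of the integral matching of size $\sum_{j\equiv n\,(2)}a_j$, specifically checking that the candidate $z$'s are nonnegative and satisfy the single boundary constraint $z_{n-1}\le a_n$; this is exactly what the alternating-sum inequalities in (1) (respectively the pairwise inequalities $a_{2i-1}\ge a_{2i}$ in (2)) are engineered to guarantee, so I expect no obstruction once the reformulation is in place. The averaging upper bound and the final ``take any $h$ of them'' step are routine, the former being nothing more than adding a parity-selected subset of the inequalities of system $(2)$.
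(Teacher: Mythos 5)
Your proof is correct and takes essentially the same route as the paper: your $z_j$ in case (1) are exactly the paper's alternating partial sums $b_j$ (with the final exponent $a_n$ on $x_{n-1}x_n$), your case (2) choice $\prod_i (x_{2i-1}x_{2i})^{a_{2i}}$ is the paper's, and your parity-selected summation of system $(2)$ is precisely how the paper bounds $y_1+\cdots+y_n$ by $\sum_{j\equiv n\,(2)}a_j$ before invoking integrality to get $\ge h$. The only difference is expository (you isolate the integer-feasibility reformulation and the averaging bound as explicit preliminary steps), not mathematical.
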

\begin{proof}
(1) Let $b_{j-1}=a_{j-1}-a_{j-2}+\cdots+(-1)^{i-1}a_{j-i}+\cdots+(-1)^{j-2}a_{1}$ for  $j=2,\dots,n-1$, then by the assumption we have $a_j\ge b_{j-1}$ for  $j=2,\dots,n-1$, and $a_{n}\le b_{n-1}$. Meanwhile, we also get that
$b_1=a_1$,  $b_{j}+b_{j-1}=a_{j}$ for  $j=2,\dots,n-1$, and $a_{n}\le a_{n-1}-b_{n-2}$. It follows that $a_{n-1}\ge a_{n}+b_{n-2}$, and $b_j\ge 0$  since $a_j\ge b_{j-1}$.  By comparing the indices of each variable we find  that $\xb^{\ab}$ can be divisible by $(x_1x_2)^{b_1}(x_2x_3)^{b_2}\cdots (x_{n-2}x_{n-1})^{b_{n-2}}(x_{n-1}x_n)^{a_n}$.

From the  inequality system (2) above, we see that if $n=2r$ then  $b_1+\cdots+b_{n-2}+a_n=\sum \limits_{i=1}^{r}a_{2i}\ge y_1+\dots+y_n$;  if $n=2r-1$ then $b_1+\cdots+b_{n-2}+a_n=\sum \limits_{i=1}^{r}a_{2i-1}\ge y_1+\dots+y_n$. In both cases, we always have $b_1+\cdots+b_{n-2}+a_n\ge h$,
as desired.

(2) If $n=2r$ and $a_{2i} \leq a_{2i-1}$ for each $i=1,\dots,r$, then it is clear that $\xb^{\ab}$ can be  divisible by $\prod\limits_{i=1}^{r}(x_{2i-1}x_{2i})^{a_{2i}}$ and $\sum \limits_{s=1}^{r}{a_{2s}} \geq y_1+\dots+y_{n}$ by the system (2), thus $\sum \limits_{s=1}^{r}{a_{2s}}\ge h$.
As expected.
\end{proof}

 Applying  similar techniques, we can get the following lemma.
\begin{Lemma}
	\label{divide2}
Let $n\ge 2$ be an integer and let $\xb^{\ab}=x_1^{a_1}\cdots x_n^{a_n}\in S$ be a monomial whose exponent vector  $\ab=(a_1,\ldots,a_n)$  satisfies one of the following four conditions:
    \begin{enumerate}
	\item  \label{divide-3} $a_j\geq a_{j-1}-a_{j-2}+\cdots+(-1)^{i-1}a_{j-i}+\cdots+(-1)^{j-2}a_{1}$ for each $j=2,\ldots,n$.
	\item  \label{divide-4} $n\ge 3$, $a_j\geq a_{j-1}-a_{j-2}+\cdots+(-1)^{i-1}a_{j-i}+\cdots+(-1)^{j-2}a_{1}$ for each  $j=2,\ldots,n-1$ and
	$a_n \leq a_{n-1}-a_{n-2}+\cdots+(-1)^{i-1}a_{n-i}+\cdots+(-1)^{n-2}a_{1}$.
	\item  \label{divide-5}  $n=2r$  and $a_{2i-1} \geq a_{2i}$ for each  $i\in [r]$.
	\item  \label{divide-6}   $n=2r-1$ with $r\geq 2$ and $a_{2i-1} \geq a_{2i}$ for each $i\in [r-1]$.
\end{enumerate}
	Suppose that a vector $\mathbf{y}=(y_1,y_2,\dots,y_{n-1})^T\in \mathbb{R}_{\geq 0}^{n-1}$  satisfies the following inequality system
	\[
	\begin{array}{cc}
	(3)\quad&
	\left\{
	\begin{aligned}
	y_1  &\leq  a_1,\\
	y_1+y_2 &\leq a_2,\\
	y_2+y_3 &\leq a_3,\\
	&\vdots \\
	y_{n-2}+y_{n-1} &\leq a_{n-1},\\
	y_{n-1} &\leq a_n.
	\end{aligned}
	\right.
	\end{array}
	\]
	Let $h=\lceil y_1+\dots+y_{n-1}\rceil$, then there exist at least $h$ monomials $e_1,\ldots,e_h\in \{x_1x_2,x_2x_3,
	\ldots,x_{n-1}x_n\}$ such that $\xb^{\ab}$ can be  divisible by $\prod_{i=1}^{h}e_i$.	
\end{Lemma}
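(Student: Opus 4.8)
The plan is to imitate the proof of Lemma~\ref{divide} almost verbatim: in each of the four cases I would produce an explicit product of path-edge monomials dividing $\xb^{\ab}$, count its total degree in edge factors, and show that this count is at least $h$. The only structural novelty compared with Lemma~\ref{divide} is that the inequality system~$(3)$ ends in $y_{n-1}\le a_n$ rather than $y_{n-1}+y_n\le a_n$ and that $\mathbf{y}$ has $n-1$ entries; everything else is bookkeeping.

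For the two alternating-sum hypotheses~(1) and~(2) I would introduce $b_1=a_1$ and $b_j=a_j-b_{j-1}$ for $j\ge 2$, so that $b_j$ equals the alternating sum $a_j-a_{j-1}+\cdots\pm a_1$ and $b_{j-1}+b_j=a_j$; the hypothesis forces $b_j\ge 0$ in the relevant range. Under condition~(1), where $a_j\ge b_{j-1}$ persists up to $j=n$, I would take the product $\prod_{j=1}^{n-1}(x_jx_{j+1})^{b_j}$: it divides $\xb^{\ab}$ since $x_1$ appears with exponent $b_1=a_1$, each interior $x_j$ with exponent $b_{j-1}+b_j=a_j$, and $x_n$ with exponent $b_{n-1}\le a_n$. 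Condition~(2) is identical to Lemma~\ref{divide}\eqref{divide-1}, so I would reuse the product $(x_1x_2)^{b_1}\cdots(x_{n-2}x_{n-1})^{b_{n-2}}(x_{n-1}x_n)^{a_n}$, whose divisibility follows from $b_{n-2}+a_n\le a_{n-1}$. For the pairwise hypotheses~(3) and~(4) I would instead use $\prod_i (x_{2i-1}x_{2i})^{a_{2i}}$, running to $i=r$ when $n=2r$ and to $i=r-1$ when $n=2r-1$; here divisibility is immediate from $a_{2i-1}\ge a_{2i}$.

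In every case the crux is to show that the total number of edge factors $T$ of the chosen product satisfies $T\ge h=\lceil y_1+\cdots+y_{n-1}\rceil$. Since $T$ is an integer, it suffices to prove $T\ge y_1+\cdots+y_{n-1}$, and I would establish this by summing a parity-dependent subfamily of the inequalities of system~$(3)$ whose right-hand sides add up to exactly $T$. Concretely, using the telescoping identities $b_1+\cdots+b_{2k}=a_2+a_4+\cdots+a_{2k}$ and $b_1+(b_2+b_3)+\cdots=a_1+a_3+a_5+\cdots$, I find that $T$ equals a sum of either even-indexed or odd-indexed $a$'s; summing the matching constraints $y_{2i-1}+y_{2i}\le a_{2i}$ (respectively $y_1\le a_1$ together with $y_{2i}+y_{2i+1}\le a_{2i+1}$), and invoking the terminal inequality $y_{n-1}\le a_n$ exactly when $T$ contains the summand $a_n$, yields $y_1+\cdots+y_{n-1}\le T$, hence $h\le T$.

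The main obstacle is purely the parity bookkeeping: the replacement of $y_{n-1}+y_n\le a_n$ by $y_{n-1}\le a_n$ shifts which endpoint constraint is available, and this is precisely why the statement is split into the even-length cases~(1),(3) and the odd-length cases~(2),(4). Once, for each parity of $n$, one has identified the correct subfamily of inequalities of~$(3)$ whose right-hand sides telescope to $T$, the verification collapses to the same elementary computation already carried out in Lemma~\ref{divide}, which is why it can be dispatched as ``applying similar techniques.''
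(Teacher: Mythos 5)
Your proposal is correct and follows essentially the same route as the paper: the same telescoping quantities $b_j$ with $b_{j-1}+b_j=a_j$ for the alternating-sum cases, the same products $(x_1x_2)^{b_1}\cdots(x_{n-1}x_n)^{b_{n-1}}$ (resp. ending in $(x_{n-1}x_n)^{a_n}$, resp. $\prod_i(x_{2i-1}x_{2i})^{a_{2i}}$), and the same parity-dependent grouping of the inequalities of system (3) to bound $y_1+\cdots+y_{n-1}$ by the integer total degree. The paper merely dispatches cases (2) and (3) as ``similar to Lemma \ref{divide}''; your write-up fills in exactly those details.
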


\begin{proof} (1) For each $j=2,\dots,n$, let $b_{j-1}=a_{j-1}-a_{j-2}+\cdots+(-1)^{i-1}a_{j-i}+\cdots+(-1)^{j-2}a_{1}$, then $b_1=a_1$ and  $b_{j}+b_{j-1}=a_{j}$ for all $j=2,\ldots, n$.
  It follows that $b_j \geq 0$ from the assumption $a_j\ge b_{j-1}$. By comparing the indices of each variable we see that $\xb^{\ab}$ can be  divisible by $(x_1x_2)^{b_1}(x_2x_3)^{b_2}\cdots (x_{n-2}x_{n-1})^{b_{n-2}}(x_{n-1}x_n)^{b_{n-1}}$.  Note that
  $\sum\limits_{i=1}\limits^{n-1}{b_i}=b_1+(b_2+b_3)+\cdots (b_{n-2}+b_{n-1})=\sum\limits_{i=1}\limits^{r}{a_{2i-1}} \geq y_1+\cdots+y_{n-1}$ when  $n=2r$,
  $\sum\limits_{i=1}\limits^{n-1}{b_i}=(b_1+b_2)+\cdots (b_{n-2}+b_{n-1})=\sum\limits_{i=1}\limits^{r-1}{a_{2i}} \geq y_1+\cdots+y_{n-1}$ when $n=2r-1$.
  In both cases, we always have $\sum\limits_{i=1}\limits^{n-1}{b_i} \geq h$, as desired.

  (2)  and (3) can be shown by arguments similar to Lemma	\ref{divide}.

  (4)  If $n=2r-1$ with $r\geq 2$ and  $a_{2i-1} \geq a_{2i}$ for each $i\in [r-1]$, then it is clear that $\xb^{\ab}$ is divisible by $(x_1x_2)^{a_2}(x_3x_4)^{a_4}(x_5x_6)^{a_6}\cdots(x_{n-4}x_{n-3})^{a_{n-3}}(x_{n-2}x_{n-1})^{a_{n-1}}$. And $\sum \limits_{i=1}^{r-1}{a_{2i}} \geq y_1+\dots+y_{n-1}$ by the system (3), which implies that $\sum \limits_{i=1}^{r-1}{a_{2i}}\ge h$, as wished.
\end{proof}

\begin{Theorem}
	\label{divide3}Let $n\ge 2$ be an integer and let
  $\xb^{\ab}=x_1^{a_1}\cdots x_n^{a_n}\in S$ be a monomial with exponent vector   $\ab=(a_1,\ldots,a_n)$. Suppose that a vector $\mathbf{y}=(y_1,y_2,\dots,y_{n-1})^T\in \mathbb{R}_{\geq 0}^{n-1}$  satisfies the following system of inequalities
	\[
	\begin{array}{cc}
(4)\quad &
\left\{
	\begin{aligned}
	y_1  &\leq  a_1,\\
	y_1+y_2 &\leq a_2,\\
	y_2+y_3 &\leq a_3,\\
	\vdots \\
	y_{n-2}+y_{n-1} &\leq a_{n-1},\\
	y_{n-1} &\leq a_n.
	\end{aligned}
	\right.
	\end{array}
\]
Let $h=\lceil y_1+\cdots+y_{n-1} \rceil$, then there exist at least $h$ monomials  $e_1,\ldots,e_h\in \{x_1x_2,x_2x_3,\dots,x_{n-1}x_n\}$ such that $\xb^{\ab}$ is divisible by $\prod_{i=1}^{h}e_i$.
\end{Theorem}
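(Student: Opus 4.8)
The plan is to argue by strong induction on $n$, reducing the unconditional statement to the four special cases already treated in Lemma \ref{divide2}. The base case $n=2$ is immediate: system (4) reads $y_1\le a_1$ and $y_1\le a_2$, so $h=\lceil y_1\rceil\le\min\{a_1,a_2\}$ and $(x_1x_2)^h$ divides $\xb^{\ab}$. For the inductive step I would track the alternating partial sums $b_1=a_1$ and $b_j=a_j-b_{j-1}$ for $j\ge 2$ (so that $b_j+b_{j-1}=a_j$), exactly as in the proof of Lemma \ref{divide2}. If $b_j\ge 0$ for all $j\le n$, then $\ab$ satisfies condition \ref{divide-3} and Lemma \ref{divide2} applies verbatim; if $b_j\ge 0$ for $j\le n-1$ but $b_n<0$ (equivalently $a_n\le b_{n-1}$), then $\ab$ satisfies condition \ref{divide-4} and again Lemma \ref{divide2} applies. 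The remaining, genuinely new, case is when the alternating sums first turn negative at an interior index: let $j_0$ be the least index with $b_{j_0}<0$, and suppose $2\le j_0\le n-1$ (here $b_1=a_1\ge 0$ forces $j_0\ge 2$). In this case I would cut the path at the vertex $j_0$, treating $\{1,\dots,j_0\}$ and $\{j_0+1,\dots,n\}$ as two subproblems on disjoint sets of variables.

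On the left I would write down the explicit divisor $D_L=(x_1x_2)^{b_1}(x_2x_3)^{b_2}\cdots(x_{j_0-2}x_{j_0-1})^{b_{j_0-2}}(x_{j_0-1}x_{j_0})^{a_{j_0}}$ coming from condition \ref{divide-4}: since $b_1,\dots,b_{j_0-1}\ge 0$ and $a_{j_0}<b_{j_0-1}$, one checks vertex by vertex that $D_L$ divides $x_1^{a_1}\cdots x_{j_0}^{a_{j_0}}$, the vertex $j_0$ being used with full multiplicity $a_{j_0}$. On the right, the truncated vector $(y_{j_0+1},\dots,y_{n-1})$ satisfies system (4) for the subpath $\{j_0+1,\dots,n\}$, because $y_{j_0+1}\le y_{j_0}+y_{j_0+1}\le a_{j_0+1}$; so the induction hypothesis (applied to the $n-j_0<n$ vertices on the right, the case $j_0=n-1$ being vacuous) yields a divisor $D_R$ of $x_{j_0+1}^{a_{j_0+1}}\cdots x_n^{a_n}$ using at least $\lceil y_{j_0+1}+\cdots+y_{n-1}\rceil$ edges from $\{x_{j_0+1}x_{j_0+2},\dots,x_{n-1}x_n\}$. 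As $D_L$ and $D_R$ involve disjoint variables, their product divides $\xb^{\ab}$.

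The apparent difficulty, and the step I expect to be the crux, is that this cut discards the middle edge $x_{j_0}x_{j_0+1}$ carrying weight $y_{j_0}$, so it is not clear that the two rounded guarantees add up to $h$. The resolution is to use the \emph{actual} edge-count of the left divisor rather than the rounded bound, namely $E_L:=b_1+\cdots+b_{j_0-2}+a_{j_0}$. Summing every second inequality of system (4), starting from the one with right-hand side $a_{j_0}$ and stepping down by two (ending at $a_1$ or $a_2$ according to the parity of $j_0$), telescopes the left-hand sides to $y_1+\cdots+y_{j_0}$ and the right-hand sides to $E_L$, which gives $E_L\ge\sum_{i=1}^{j_0}y_i$; in particular $E_L$ already absorbs the discarded $y_{j_0}$. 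Since $E_L$ is the number of edges in $D_L$, the total edge count is an integer that is at least $\sum_{i=1}^{j_0}y_i+\sum_{i=j_0+1}^{n-1}y_i=\sum_{i=1}^{n-1}y_i$, hence at least $h=\lceil\sum_{i=1}^{n-1}y_i\rceil$, completing the induction. I would also note, as conceptual background, that the statement is precisely the integrality of the $b$-matching polytope of a path, whose vertex--edge incidence matrix has the consecutive-ones property and is totally unimodular; the inductive cut above is just the elementary, self-contained substitute for invoking that fact.
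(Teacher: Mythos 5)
Your proof is correct, and in substance it travels the same road as the paper's own argument: find where the alternating partial sums $b_j$ ($b_1=a_1$, $b_j=a_j-b_{j-1}$) first go negative, cut the path there, handle the left block with the explicit divisor $(x_1x_2)^{b_1}\cdots(x_{j_0-2}x_{j_0-1})^{b_{j_0-2}}(x_{j_0-1}x_{j_0})^{a_{j_0}}$ of Lemma \ref{divide}(1), and treat the right block separately. Your ``crux'' inequality $E_L\ge y_1+\cdots+y_{j_0}$, obtained by summing every second inequality of system (4), is exactly the computation carried out inside the paper's proof of Lemma \ref{divide}; the crossing variable $y_{j_0}$ is absorbed there, just as it is in the paper's segment-by-segment count. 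Where you genuinely differ is in the organization, and your version is tighter. The paper first splits off a segment $(a_1,\dots,a_s)$ via a case distinction ($a_1>a_2$ versus $a_1\le a_2$), then asserts that ``by repeating the above discussion'' the tail decomposes into finitely many segments $C_1,\dots,C_t$, each satisfying Lemma \ref{divide} (the last one Lemma \ref{divide2}), and finally sums the rounded guarantees using $\sum_i\lceil\,\cdot\,\rceil\ge\lceil\sum_i\,\cdot\,\rceil$. Your strong induction on $n$ replaces that informal iteration by a single cut plus the induction hypothesis, which makes the termination and bookkeeping automatic, and your cut criterion (least $j_0$ with $b_{j_0}<0$) subsumes the paper's two cases in one; using the integer edge count $E_L$ rather than a ceiling is an equivalent way to close the count. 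Two small points to tidy: for $j_0=2$ your left divisor degenerates to $(x_1x_2)^{a_2}$, which is really the case of Lemma \ref{divide}(2) with $r=1$ rather than condition (1) (the latter needs at least three vertices), though your vertex-by-vertex verification covers it anyway; and for $j_0=n-1$ one should state explicitly that $E_L$ is an integer bounded below by $y_1+\cdots+y_{n-1}$, hence by $h$, which is what your ``vacuous'' right side amounts to.
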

\begin{proof}If  $n=2$, then it is trivial. If $n=3$, then by comparing the sizes of $a_1$, $a_2$ and $a_3$, we see that $\ab=(a_1,a_2,a_3)$ satisfies  Lemma \ref{divide2}, so the desired result follows from Lemma \ref{divide2}. Now we  assume that  $n\ge 4$. If $\ab=(a_1,\ldots,a_n)$  satisfies   Lemma \ref{divide2}, then the desired result follows from Lemma \ref{divide2}.
Otherwise, there are two subcases:
\begin{enumerate}	
	\item[(i)] When $a_1>a_2$. If $n=2r$, then there exists some $t\in [r-1]$ such that  $a_{2i-1} \ge a_{2i}$ for each $i\in [t]$ and  $a_{2t+1} < a_{2t+2}$. In this case,  the vector $(a_1,\ldots,a_{2t})$ satisfies the assumption (2) of Lemma 4.4.  Otherwise, if $n=2r-1$, then  there exists some  $t'\in [r-2]$ such that $a_{2i-1} \ge a_{2i}$ for each $i\in [t']$ and $a_{2t'+1} < a_{2t'+2}$. In this case, the vector $(a_1,\ldots,a_{2t'})$ satisfies the assumption (2) of Lemma 4.4.
	\item[(ii)]	
If $a_1 \leq a_2$, then there exists some $s\in [n-1]$ such that $a_j\geq a_{j-1}-a_{j-2}+\cdots+(-1)^{i-1}a_{j-i}+\cdots+(-1)^{j-2}a_{1}$ for each $j=2,\ldots,s-1$ and $a_s \leq a_{s-1}-a_{s-2}+\cdots+(-1)^{i-1}a_{s-i}+\cdots+(-1)^{s-2}a_{1}$.	In this case, the vector $(a_1,\ldots,a_{s})$ satisfies the assumption (1) of Lemma 4.4.
\end{enumerate}
When $a_1>a_2$. If  $n=2r$, then  we let $s=2t$. Otherwise, if $n=2r-1$, then  we set $s=2t'$. Thus the first $s$ components of the vector $(a_1,\ldots,a_s,0,\ldots,0)$ satisfy the following
 system (5) of inequalities
\[
 \begin{array}{cccc}
 (5)\quad&
 \left\{
 \begin{aligned}
 y_1  &\leq  a_1,\\
 y_1+y_2 &\leq a_2,\\
 y_2+y_3 &\leq a_3,\\
 &\vdots \\
 y_{s-2}+y_{s-1} &\leq a_{s-1},\\
 y_{s-1}+y_s &\leq a_s.
 \end{aligned}
 \right.&\quad\quad(6)\quad&
 \left\{
 \begin{aligned}
 y_{s+1}  &\leq  a_{s+1},\\
 y_{s+1}+y_{s+2} &\leq a_{s+2},\\
 y_{s+2}+y_{s+3} &\leq a_{s+3},\\
 \vdots \\
 y_{n-2}+y_{n-1} &\leq a_{n-1},\\
 y_{n-1} &\leq a_{n}.
 \end{aligned}
 \right.
 \end{array}
 \]
Let $\mathbf{c}=(a_1,\ldots,a_s)$, then the vector $\mathbf{c}$ satisfies  Lemma \ref{divide}, so there exist at least $h'$ monomials  $e_1,\ldots,e_{h'}\in \{x_1x_2,x_2x_3,\dots,x_{s-1}x_s\}$ such that $\xb^{\cb}$ can be divisible by $\prod_{i=1}^{h'}e_i$, where $h'=\lceil y_1+\cdots+y_s \rceil$.
Now we consider two subcases depending on whether $s+1=n$ or not:
(a) If $s+1=n$, then in this case we choose $h=h'$ and the result follows. (b) If $s+1<n$, then   the last $(n-s)$ components of the vector $(0,\ldots,0, a_{s+1},\ldots,a_n)$ satisfy the above  system (6). In this case, let $\mathbf{d}=(a_{s+1},\ldots,a_n)$, then the vector
$\mathbf{d}$ satisfies   Lemma 	\ref{divide2}. It follows from  Lemma 	\ref{divide2} that  there exist at least $h''$ monomials  $f_1,\ldots,f_{h''}\in \{x_{s+1}x_{s+2},x_{s+2}x_{s+3},\dots,x_{n-1}x_n\}$ such that $\xb^{\db}$ can be divisible by $\prod_{i=1}^{h''}f_i$, where $h''=\lceil y_{s+1}+\cdots+y_{n-1} \rceil$.
Therefore  $\xb^{\ab}$ can be  divide by $(\prod_{i=1}^{h'}e_i)(\prod_{j=1}^{h^{''}}{f_j})$ and  $h'+h'' \ge h$.
Otherwise, by repeating the above discussion, we can decompose the set  $\{a_{s+1},a_{s+2},\dots,a_n\}$ into disjoint  unions of  finite  continuous segments, say $t$, such that for  each $i\in [t-1]$, the $i$-th continuous segment satisfies the  assumption (1) or (2) in  Lemma \ref{divide} and the $t$-th segment satisfies one of the four conditions in Lemma \ref{divide2}. Thus  we can write $\{a_{s+1},\ldots,a_n\}$ as $\{a_{s+1},\ldots,a_n\}=\bigsqcup\limits_{i=1}^{t}C_i$, where $C_i=\{a_{p_i+1},a_{p_i+2},\ldots,a_{p_{i+1}}\}$ for each $i\in [t]$ with $p_1=s$  and  $p_{t+1}=n$. Note that for each $i\in [t-1]$,   $C_i$  satisfies the following system  (7) of inequalities
 and
$C_t$  satisfies the following  system  (8) of inequalities when $|C_t|\ge 2$.
\[
	\begin{array}{cccc}
	(7)\quad&
	\left\{
	\begin{aligned}
	y_{p_i+1}  &\leq  a_{p_i+1},\\
	y_{p_i+1}+y_{p_i+2} &\leq a_{p_i+2},\\
	y_{p_i+2}+y_{p_i+3} &\leq a_{p_i+3},\\
	\vdots \\
	y_{p_{i+1}-2}+y_{p_{i+1}-1} &\leq a_{p_{i+1}-1},\\
	y_{p_{i+1}-1}+y_{p_{i+1}} &\leq a_{p_{i+1}}.
	\end{aligned}
	\right.&(8)\quad&
	\left\{
	\begin{aligned}
	y_{p_t+1} &\leq  a_{p_t+1},\\
	y_{p_t+1}+y_{p_t+2} &\leq a_{p_t+2},\\
	y_{p_t+2}+y_{p_t+3} &\leq a_{p_t+3},\\
	\vdots \\
	y_{p_{t+1}-2}+y_{p_{t+1}-1} &\leq a_{p_{t+1}-1},\\
	y_{p_{t+1}-1} &\leq a_{p_{t+1}}.
	\end{aligned}
	\right.
	\end{array}
	\]
It follows from Lemma \ref{divide} that for each $i\in [t-1]$, there exist at least $h_i$ monomials $u_{i1},\ldots,u_{ih_i} \in \{x_{p_i+1}x_{p_i+2},x_{p_i+2}x_{p_i+3},\ldots,x_{p_{i+1}-1}x_{p_{i+1}}\}$ such that $\xb^{\c_ib}$ can be  divisible by $\prod_{j=1}^{h_i}u_{ij}$, where the exponent vector $\mathbf{c_i}=(0,\ldots,0,a_{p_i+1},a_{p_i+2},\ldots,a_{p_{i+1}},0,\ldots,0)$  and each $h_i=\lceil y_{p_i+1}+\cdots+y_{p_{i+1}} \rceil$.
	If $|C_t|=1$, then  in this case  we have $p_t+1=n$, which implies that  $h'+\sum\limits_{i=1}\limits^{t-1}{h_i} \ge \lceil y_1+\cdots+y_s \rceil+\sum\limits_{i=1}\limits^{t-1}\lceil y_{p_i+1}+\cdots+y_{p_{i+1}} \rceil\ge h$ and
$\xb^{\ab}$ can be  divisible by $(\prod_{i=1}^{h'}e_i)(\prod_{i=1}^{t-1}\prod_{j=1}^{h_i}u_{ij})$, since $\xb^{\ab}=\xb^{\cb}(\prod_{i=1}^{t-1}{\xb^{\cb_i}})x_n^{a_n}$. If $|C_t|\ge 2$, then by Lemma \ref{divide2},
 there exist at least $h_t$ monomials  $u_{t1},\ldots,u_{th_t} \in \{x_{p_t+1}x_{p_t+2},x_{p_t+2}x_{p_t+3},\ldots,x_{n-1}x_{n}\}$ such that $\xb^\mathbf{c_t}$ can be  divisible by
$\prod_{j=1}^{h_t}u_{tj}$, where $h_t=\lceil y_{p_t+1}+\cdots+y_{n-1} \rceil$  and the exponent vector $\mathbf{c_t}=(0,\ldots,0,a_{p_t+1},a_{p_t+2},\ldots,a_{n})$.
In this case, $h'+\sum\limits_{i=1}\limits^{t}{h_i} \ge \lceil y_1+\cdots+y_s \rceil+\sum\limits_{i=1}\limits^{t}\lceil y_{p_i+1}+\cdots+y_{p_{i+1}} \rceil\ge h$
and  $\xb^{\ab}$ can be  divided by $(\prod_{i=1}^{h'}e_i) (\prod_{i=1}^{t}\prod_{j=1}^{h_i}u_{ij})$. This  completes the proof.
\end{proof}

\begin{Theorem}
\label{cycle1}
Let $C_\omega^n$ be a weighted cycle on the set $[n]$, where exactly three edges have non-trivial weights. Let $I=I(C_\omega^n)$ be the  edge ideal of the  cycle  $C_\omega^n$. If $I$ is integrally closed, then $I$ is normal.	
\end{Theorem}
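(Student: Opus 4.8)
The plan is to first determine the graph completely, then turn normality into a rounding statement about the associated linear system and close it with one short compensation estimate.

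\textbf{Structural reduction.} Since $I=I(C_\omega^n)$ is integrally closed, Theorem \ref{main} forbids all three induced subgraphs of Lemma \ref{3subgraph}. With exactly three heavy edges this is very rigid: no two heavy edges may share a vertex (else an induced $P_\omega^3$ with two heavy edges), and for any two heavy edges the subgraph induced on their four endpoints must not be $P_\omega^2\sqcup P_\omega^2$. In a cycle the latter happens unless the two heavy edges are separated by a single edge along one of the two connecting arcs; since for any two of the three heavy edges the complementary arc runs through the third heavy edge and hence has length $\ge 4$, every cyclic gap between consecutive heavy edges must equal $1$. Thus $n=6$ and, after relabeling, $I=(x_1^{\omega_1}x_2^{\omega_1},\,x_2x_3,\,x_3^{\omega_3}x_4^{\omega_3},\,x_4x_5,\,x_5^{\omega_5}x_6^{\omega_5},\,x_6x_1)$ with $\omega_1,\omega_3,\omega_5\ge 2$, the heavy and the light edges each forming a perfect matching of $C_\omega^6$.

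\textbf{Reformulation.} Fix $k\ge 1$ and a monomial $\xb^\ab$ with $\ab=(a_1,\dots,a_6)$. By Lemma \ref{normal}(\ref{normal-2}), $\xb^\ab\in\overline{I^k}$ iff there is $\mathbf y=(y_1,\dots,y_6)\in\RR_{\ge 0}^6$ with $\sum_i y_i\ge k$ satisfying the six vertex inequalities $\omega_1y_1+y_6\le a_1$, $\omega_1y_1+y_2\le a_2$, $y_2+\omega_3y_3\le a_3$, $\omega_3y_3+y_4\le a_4$, $y_4+\omega_5y_5\le a_5$, $\omega_5y_5+y_6\le a_6$, where $y_1,y_3,y_5$ are the heavy and $y_2,y_4,y_6$ the light multiplicities. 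By Lemma \ref{normal}(\ref{normal-1}) it is then enough to produce an \emph{integral} $\mathbf z\in\ZZ_{\ge 0}^6$ obeying the same six inequalities with $\sum_i z_i\ge k$; this gives $\overline{I^k}\subseteq I^k$ and hence normality.

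\textbf{The rounding.} Given such a real $\mathbf y$, round the heavy coordinates down, $z_1=\lfloor y_1\rfloor,\ z_3=\lfloor y_3\rfloor,\ z_5=\lfloor y_5\rfloor$, and make each light coordinate as large as its two incident inequalities permit, $z_2=\min(a_2-\omega_1z_1,\ a_3-\omega_3z_3)$ and cyclically $z_4,z_6$. These are non-negative integers satisfying all six inequalities by construction. Writing $f_i=y_i-\lfloor y_i\rfloor$ for $i\in\{1,3,5\}$, the two inequalities bounding $y_2$ give $a_2-\omega_1z_1\ge y_2+\omega_1f_1$ and $a_3-\omega_3z_3\ge y_2+\omega_3f_3$, so $z_2\ge y_2+\min(\omega_1f_1,\omega_3f_3)$, and similarly for $z_4,z_6$. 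Summing,
\[
\sum_i z_i\ \ge\ \sum_i y_i-(f_1+f_3+f_5)+\bigl[\min(\omega_1f_1,\omega_3f_3)+\min(\omega_3f_3,\omega_5f_5)+\min(\omega_5f_5,\omega_1f_1)\bigr],
\]
so everything reduces to the compensation inequality $M>f_1+f_3+f_5-1$, where $M$ is the bracketed sum of pairwise minima.

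\textbf{The crux and the obstacle.} Set $P_1=\omega_1f_1,\ P_3=\omega_3f_3,\ P_5=\omega_5f_5$ and sort them as $P_{(1)}\le P_{(2)}\le P_{(3)}$, so $M=2P_{(1)}+P_{(2)}$. Because each heavy weight is $\ge 2$ we have $f_i\le P_i/2$, and because each $f_i<1$ the summand of $f_1+f_3+f_5$ coming from $P_{(3)}$ is $<1$; hence $f_1+f_3+f_5< P_{(1)}/2+P_{(2)}/2+1$, while $M\ge (P_{(1)}+P_{(2)})/2$, giving $M>f_1+f_3+f_5-1$. Therefore $\sum_i z_i>\sum_i y_i-1\ge k-1$, and since $\sum_i z_i$ is an integer we get $\sum_i z_i\ge k$, as required. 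I expect the reduction and the reformulation to be routine; the only real work is this compensation estimate, and it is exactly here that the hypothesis $\omega_i\ge 2$ (forced by integral closure) enters through $f_i\le P_i/2$. The estimate collapses at weight $1$, which is the quantitative reflection of why weight-$1$ edges cannot absorb the capacity freed by rounding a heavy generator down — precisely the phenomenon producing the counterexamples of Lemma \ref{3subgraph}.
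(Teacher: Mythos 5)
Your argument is correct, but its core is genuinely different from the paper's. The shared part is the frame: like the paper, you use Theorem \ref{main} to force $n=6$ with the three heavy edges pairwise non-adjacent and forming a perfect matching (your count of the complementary arc as having length $\ge 4$ should be $\ge 3$, which changes nothing since all that matters is that it exceeds $1$), and you use Lemma \ref{normal} to convert $\xb^{\ab}\in\overline{I^k}$ into a nonnegative real solution $\mathbf{y}$ of the six vertex inequalities with $\sum_i y_i\ge k$, and $\xb^{\ab}\in I^k$ into the existence of an integral such solution. From there the paper argues by cases: it derives $a_2+a_3\ge 2(y_1+y_2+y_3)$ and its two cyclic analogues, splits according to whether both of $a_2,a_3$ dominate $y_1+y_2+y_3$ or only one does, and in each case (with further subcases $a_4\ge a_5$ versus $a_4<a_5$, and then on how $a_1,a_6$ compare with $y_1+y_5+y_6$) exhibits an explicit product of $k$ generators dividing $\xb^{\ab}$, the integrality bookkeeping being handled by Remark \ref{Rounding}. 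You instead give a single, case-free construction of an integral solution --- heavy coordinates rounded down, each light coordinate re-maximized against its two incident constraints --- and reduce the whole theorem to the one compensation inequality $2P_{(1)}+P_{(2)}>f_1+f_3+f_5-1$, which follows from $f_i\le P_i/2$ (exactly where $\omega_i\ge 2$, i.e.\ integral closure, enters) together with $f_{(3)}<1$; I checked the construction satisfies all six constraints, that $z_2\ge y_2+\min(\omega_1f_1,\omega_3f_3)$ and its analogues hold, and that the sorted-minimum identity $M=2P_{(1)}+P_{(2)}$ is right, so the proof goes through. What your route buys is uniformity and transparency: no case analysis, an explicit integer certificate, and an estimate that visibly collapses at weight $1$, matching the counterexamples of Lemma \ref{3subgraph}. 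What the paper's route buys is that its divisibility-style machinery (Lemmas \ref{divide}, \ref{divide2} and Theorem \ref{divide3}) is the same toolkit it reuses for the cycles with one or two heavy edges and for paths, whereas your compensation estimate is tailored to the three-heavy-edge matching structure and would have to be reworked for those other cases.
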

\begin{proof} If $C_\omega^n$ has exactly three edges with non-trivial weights, then, by  Theorem
\ref{main} we have $n=6$ and any two edges in these non-trivially weighted edges do not share a common vertex.
 By symmetry, let $E(C_\omega^n)=\{e_1,\dots,e_6\}$ with each $\omega_i=\omega(e_i)$, where $e_i=\{i,i+1\}$ for $i \in [5]$ and $e_6=\{6,1\}$. Then we can assume that $\omega_1, \omega_3, \omega_5 \geq 2$ and  $\omega_2=\omega_4=\omega_6=1$.\\
We will prove that $\overline{I^k}=I^k$ for all $k\ge 2$.
Since $I^k\subseteq \overline{I^k}$ is always valid,  it suffices to prove that  $\overline{I^k}\subseteq I^k$.\\
Let $\xb^{\ab}=x_1^{a_1}\cdots x_6^{a_6} \in \mathcal{G}(\overline{I^k})$, then $v_\mathbf{a}^\ast(I)\geq k$ by Lemma \ref{normal}(\ref{normal-2}).  It follows from the definition of $v_\mathbf{a}^\ast(I)$ that there exists the
vector $\mathbf{y}=(y_1,y_2,\dots,y_6)^T\in \mathbb{R}_{\geq 0}^{6}$ satisfying the following system  (9) of inequalities
\[
\begin{array}{cccc}
(9)\quad&
\left\{
\begin{aligned}
y_1+\dots+y_6 &\geq k,&\circled{1}\\
\omega_1y_1+y_6 &\leq a_1,&\circled{2}\\
\omega_1y_1+y_2 &\leq a_2,&\circled{3}\\
y_2+\omega_3y_3 &\leq a_3,&\circled{4}\\
\omega_3y_3+y_4 &\leq a_4,&\circled{5}\\
y_4+\omega_5y_5 &\leq a_5,&\circled{6}\\
\omega_5y_5+y_6 &\leq a_6.&\circled{7}
\end{aligned}
\right.
\end{array}
\]
In this case, we get  $a_2+a_3 \geq \omega_1y_1+2y_2+\omega_3y_3 \geq 2(y_1+y_2+y_3)$  by $\circled{3}$ and  $\circled{4}$  in  system (9). Similarly, $a_4+a_5 \geq  2(y_3+y_4+y_5)$  and $a_1+a_6 \geq 2(y_1+y_5+y_6)$.  Due to symmetry, we
only need to prove that $\xb^{\ab} \in I^k$ provided that  $a_2+a_3 \geq 2(y_1+y_2+y_3)$. We distinguish between the following two cases:

(I)  If $a_2,a_3 \geq y_1+y_2+y_3$, then $a_2,a_3 \geq \lceil y_1+y_2+y_3\rceil $. By  the   system (9), we have    $a_4\ge \lceil y_4 \rceil$, $a_5\ge \omega_5\lfloor y_5\rfloor+ \lceil y_4\rceil$,  $a_6\ge \omega_5\lfloor y_5\rfloor+\lceil y_6\rceil$ and  $a_1\ge \lceil y_6\rceil$,  so  $\xb^{\ab}$ is divisible by
$(x_2x_3)^{\lceil y_1+y_2+y_3 \rceil}(x_4x_5)^{\lceil y_4 \rceil}(x_5^{\omega_5}x_6^{\omega_5})^{\lfloor y_5 \rfloor}(x_6x_1)^{\lceil y_6 \rceil}$.
Note that $\lceil y_1+y_2+y_3 \rceil+\lceil y_4 \rceil+y_5+\lceil y_6 \rceil \geq k$, since $y_1+\dots+y_6 \geq k$. It follows that $y_5 \geq k-(\lceil y_1+y_2+y_3 \rceil+\lceil y_4 \rceil+\lceil y_6 \rceil )$, which forces that $\lfloor y_5\rfloor \geq k-(\lceil y_1+y_2+y_3 \rceil+\lceil y_4 \rceil+\lceil y_6 \rceil )$. since $k-(\lceil y_1+y_2+y_3 \rceil+\lceil y_4 \rceil+\lceil y_6 \rceil )$ is an integer.    Therefore,  $\lceil y_1+y_2+y_3 \rceil+\lceil y_4 \rceil+\lfloor y_5 \rfloor+\lceil y_6 \rceil \geq k$, which gives the desired result.

(II)  If $a_2 \geq y_1+y_2+y_3 >a_3$, or $a_3 \geq y_1+y_2+y_3>a_2$. By  symmetry,  it suffices to prove that $\xb^{\ab} \in I^k$ provided that  $a_2 \geq y_1+y_2+y_3 >a_3$.
In this case, we get $y_1 =y_1+y_2+y_3 -(y_2+y_3) \geq (y_1+y_2+y_3)-a_3$  by $\circled{4}$ in  system (9). It follows from $\circled{2}$ in  system (9) that
$ a_1 \geq \omega_1y_1+y_6 \geq \omega_1(y_1+y_2+y_3-a_3)+y_6$.
So by Remark \ref{Rounding}(\ref{Rounding_2}), we get
\begin{equation}
a_1 \geq \omega_1\lfloor y_1+y_2+y_3-a_3 \rfloor+\lceil y_6 \rceil. \label{eqn: May3_1}
\end{equation}
On the other hand, from  $\circled{3}$ and $\circled{4}$ in  system (9), we have
\begin{align*}
a_2+a_3  &\ge\omega_1y_1+2y_2+\omega_3y_3\\
&=\omega_1(y_1+y_2+y_3-a_3)+\omega_1(a_3-y_2-y_3)+2y_2+\omega_3y_3\\
& \ge \omega_1(y_1+y_2+y_3-a_3)+2(a_3-y_2-y_3)+2y_2+2y_3\\
& \ge \omega_1(y_1+y_2+y_3-a_3)+2a_3,
\end{align*}
which forces that  $a_2 \geq \omega_1(y_1+y_2+y_3-a_3)+a_3$, so we have
\begin{equation}
a_2 \geq \omega_1\lfloor y_1+y_2+y_3-a_3\rfloor+a_3.
\end{equation}
We consider the following two subcases:

(a) If  $a_4\ge a_5$, then $\xb^{\ab}$ is divisible by  $(x_1x_2)^{\omega_1\lfloor y_1+y_2+y_3-a_3 \rfloor} (x_2x_3)^{a_3}(x_4x_5)^{a_5}(x_6x_1)^{\lceil y_6 \rceil}$  by  system (9).
By $\circled{6}$ in  system (9), we have   $a_5+\lceil y_6 \rceil \geq y_4+y_5+y_6$. Thus $(y_1+y_2+y_3-a_3)+a_3+a_5+\lceil y_6 \rceil \geq y_1+y_2+y_3+y_4+y_5+y_6\ge  k$, i.e., $y_1+y_2+y_3-a_3 \geq k-(a_3+a_5+\lceil y_6 \rceil)$. This yields that $\lfloor y_1+y_2+y_3-a_3 \rfloor \geq k-(a_3+a_5+\lceil y_6 \rceil)$, i.e., $\lfloor y_1+y_2+y_3-a_3 \rfloor + a_3+a_5+\lceil y_6 \rceil \geq k$, so $\xb^{\ab} \in I^k$.

(b) If $a_4<a_5$. We will prove that  $\xb^{\ab} \in I^k$ in the following two scenarios:

(i) If $a_6\geq y_5+y_6+y_1>a_1$, then  by similar arguments as for the condition that  $a_2 \geq y_1+y_2+y_3 >a_3$ and $a_4\ge a_5$, it follows  that  $\xb^{\ab}$ is divisible by  $(x_5x_6)^{\omega_5\lfloor y_1+y_5+y_6-a_1\rfloor}(x_1x_6)^{a_1}\cdot(x_2x_3)^{a_3}(x_4x_5)^{\lceil y_4 \rceil}$ and $\lfloor y_1+y_5+y_6-a_1\rfloor+a_1+a_3+\lceil y_4 \rceil\ge k$.

(ii) If $a_1\geq y_5+y_6+y_1>a_6$, then again by similar arguments as for the condition that  $a_2 \geq y_1+y_2+y_3 >a_3$ and $a_4\ge a_5$, we get that  $\xb^{\ab}$ is divisible by  $(x_1x_2)^{\omega_1\lfloor y_1+y_5+y_6-a_6\rfloor}(x_1x_6)^{a_6}(x_4x_5)^{a_4}(x_2x_3)^{\lceil y_2 \rceil}$ and $\lfloor y_1+y_5+y_6-a_6\rfloor+a_6+a_4+\lceil y_2 \rceil\ge k$.
In both cases, we always have   $\xb^{\ab}\in I^k$.
\end{proof}

\begin{Theorem}
\label{cycle2}
Let $C_\omega^n$ be a weighted cycle on the vertex set $[n]$, where exactly two edges have non-trivial weights. Let $I=I(C_\omega^n)$ be the  edge ideal of the cycle  $C_\omega^n$. If $I$ is integrally closed, then $I$ is normal.	
\end{Theorem}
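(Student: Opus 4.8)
The plan is to follow the same circle-of-vectors strategy already used in Theorem \ref{cycle1}, specializing it to the case where exactly two edges carry non-trivial weight. First I would invoke Theorem \ref{main} to pin down the shape of $C_\omega^n$: since $I$ is integrally closed and all of $P_\omega^3$, $P_\omega^2\sqcup P_\omega^2$, and $C_\omega^3$ are forbidden as induced subgraphs, any two non-trivially weighted edges must be non-adjacent and must not form an induced $P_\omega^2\sqcup P_\omega^2$ configuration, so the two heavy edges sit at a controlled distance on the cycle. By symmetry I would relabel so that $E(C_\omega^n)=\{e_1,\dots,e_n\}$ with $e_i=\{i,i+1\}$ (indices mod $n$), and the two non-trivial weights are $\omega_1,\omega_j\geq 2$ for an appropriate $j$, with all remaining $\omega_i=1$.

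Next, as in the previous proof, I would reduce $\overline{I^k}\subseteq I^k$ to a combinatorial divisibility statement. Taking $\xb^{\ab}\in\mathcal G(\overline{I^k})$, Lemma \ref{normal}(\ref{normal-2}) gives a real vector $\mathbf y\in\mathbb R_{\geq 0}^{n}$ with $\sum_i y_i\geq k$ and $M\cdot\mathbf y\leq \mathbf a$, where $M$ is the exponent matrix of $I$. The inequality system splits the cycle into two weighted edges (giving constraints of the form $\omega_1 y_1+y_n\leq a_1$, $\omega_1 y_1+y_2\leq a_2$, etc.) and two trivially weighted arcs connecting them, on each of which the constraints are exactly of the path type $y_{i-1}+y_i\leq a_i$. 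The heart of the argument is to produce, from the entries of $\ab$, enough monomials $(x_ix_{i+1})$ — counted with multiplicity at least $\lceil y_1+\cdots+y_n\rceil\geq k$ — dividing $\xb^{\ab}$, which by Lemma \ref{normal}(\ref{normal-1}) forces $\xb^{\ab}\in I^k$.

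To do this I would cut the cycle at the two heavy edges into two open paths and apply Theorem \ref{divide3} (equivalently Lemmas \ref{divide} and \ref{divide2}) to each arc separately, exactly as the path case was handled. The two heavy edges themselves contribute their own factors $x_1^{\omega_1}x_2^{\omega_1}$ and $x_j^{\omega_j}x_{j+1}^{\omega_j}$, and the key bookkeeping is to absorb the weighted inequalities $\omega_1 y_1\leq\cdots$ so that the floor/ceiling rounding (Remark \ref{Rounding}) never loses an integer unit across the cut points. I would argue by a case analysis comparing the relevant partial sums $a_2+\cdots$ against the corresponding $y$-sums — as in cases (I), (II) of Theorem \ref{cycle1} — to decide where each arc's path-decomposition begins, and then combine the arc counts with the two heavy-edge factors to reach a total multiplicity $\geq k$.

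The main obstacle will be the rounding at the junctions between the heavy edges and the trivially weighted arcs: the real solution $\mathbf y$ distributes weight continuously, and I must show that the integer exponents $a_i$ admit an \emph{integral} decomposition of total size at least $k$, not merely a fractional one. This is precisely where Remark \ref{Rounding} does the work, but the difficulty is that a single vertex index may be shared between a heavy edge's constraint and an arc's constraint, so I would need to allocate its exponent carefully between the two pieces — choosing $\lfloor\cdot\rfloor$ on the heavy-edge side and $\lceil\cdot\rceil$ on the arc side (or vice versa) so that both pieces simultaneously achieve their required integer counts and the floors and ceilings add up correctly to at least $k$. Establishing that the two distance regimes for $j$ (the two arcs being of various parities) do not obstruct this allocation is the delicate point that the preceding lemmas are engineered to handle.
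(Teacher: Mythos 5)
Your skeleton (Theorem \ref{main} for the structure, Lemma \ref{normal} to reduce normality to a divisibility count, Theorem \ref{divide3} for the trivially weighted part) agrees with the paper's, but your central combinatorial step does not, and as described it fails. First, the structural reduction must be sharper than ``controlled distance'': adjacent heavy edges would give an induced $P_\omega^3$ with all weights $\geq 2$, and heavy edges separated by at least two trivial edges on \emph{both} arcs would give an induced $P_\omega^2\sqcup P_\omega^2$, so the two heavy edges are separated by exactly one trivially weighted edge on (at least) one side. Up to symmetry the heavy edges are $e_1=\{1,2\}$ and $e_3=\{3,4\}$, with the single trivial edge $e_2=\{2,3\}$ between them; in particular there are not ``two distance regimes for $j$'' to worry about. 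This middle edge $e_2$ is not bookkeeping --- it carries the whole proof, and your decomposition never isolates it.

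The gap is precisely your allocation ``apply Theorem \ref{divide3} to each arc separately and let the heavy edges contribute their own factors.'' Take $n=6$, $\omega_1=\omega_3=2$, all other weights $1$, $k=2$, $\ab=(1,1,1,1,1,1)$, and $\mathbf{y}=(\tfrac12,0,\tfrac12,0,1,0)$; one checks $\mathbf{y}$ is feasible for the system, so $\xb^{\ab}\in\overline{I^2}$. Your long arc yields $\lceil y_4+y_5+y_6\rceil=1$ monomial, your short arc (governed only by $y_2$) yields $\lceil y_2\rceil=0$, and neither heavy edge can be used even once, since one use of $e_1$ would require $a_1,a_2\geq\omega_1=2$ while $a_1=a_2=1$ (similarly for $e_3$); the total is $1<k$. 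No floor/ceiling reallocation at the junctions repairs this, because the deficit comes from the weight sitting on $y_1,y_3$ themselves, not from rounding across a cut --- yet $\xb^{\ab}=(x_2x_3)(x_5x_6)x_1x_4\in I^2$. What is missing is the transfer of the heavy-edge weight onto the middle edge: summing $\omega_1y_1+y_2\leq a_2$ and $y_2+\omega_3y_3\leq a_3$ gives $a_2+a_3\geq 2(y_1+y_2+y_3)\geq 2(k-h)$, where $h=\lceil y_4+\cdots+y_n\rceil$. The paper then argues in two cases: if $a_2,a_3\geq k-h$, then $(x_2x_3)^{k-h}$ times the $h$ arc monomials already witnesses $\xb^{\ab}\in I^k$, with no heavy edge used at all; if instead, say, $a_3<k-h$, one deduces $y_1\geq k-h-a_3$, hence $a_1\geq\omega_1(k-h-a_3)+y_n$ and $a_2\geq\omega_1(k-h-a_3)+a_3$, so $e_1$ can be used exactly the \emph{integer} $k-h-a_3$ times together with $(x_2x_3)^{a_3}$ and $h$ arc monomials, the arc count surviving after $a_1$ is replaced by $a_1-\omega_1(k-h-a_3)$ in Theorem \ref{divide3}. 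This dichotomy --- the trivial middle edge absorbing all of $y_1+y_2+y_3$, or an integer deficit $k-h-a_3$ charged to one heavy edge via derived inequalities --- is the idea your proposal lacks, and without it the per-arc counts cannot be made to sum to $k$.
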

\begin{proof}Let $E(C_\omega^n)=\{e_1,\dots,e_n\}$ with  each $\omega_i=\omega(e_i)$ and $e_i=\{i,i+1\}$, where $i$ is identified with the integer $0<j\le n$ such that $j\equiv i\pmod n$. Since $C_\omega^n$ has exactly two edges with non-trivial weights,   we can assume by symmetry that $\omega_1, \omega_3 \geq 2$ and $\omega_i=1$ for $i\in [n]$ with $i\ne1,3$.
	In this case, we will prove that $\overline{I^k}=I^k$ for all $k\ge 2$.
	Since $I^k\subseteq \overline{I^k}$ is always valid, it suffices to prove that  $\overline{I^k}\subseteq I^k$.
	
	Let $\xb^{\ab}=x_1^{a_1}\cdots x_n^{a_n} \in \mathcal{G}(\overline{I^k})$, then $v_\mathbf{a}^\ast(I)\geq k$ by Lemma \ref{normal}(\ref{normal-2}).  It follows from the definition of $v_\mathbf{a}^\ast(I)$ that there exists the
	vector $\mathbf{y}=(y_1,y_2,\dots,y_n)^T\in \mathbb{R}_{\geq 0}^{n}$ which satisfies the following  system (10) of  inequalities
	\[
	\begin{array}{cccc}
	(10)\quad&
	\left\{
	\begin{aligned}
	y_1+\dots+y_n &\geq k,&\circled{1}\\
	\omega_1y_1+y_n &\leq  a_1,&\circled{2}\\
	\omega_1y_1+y_2 &\leq a_2,&\circled{3}\\
	y_2+\omega_3y_3 &\leq a_3,&\circled{4}\\
	\omega_3y_3+y_4 &\leq a_4,&\circled{5}\\
	y_4+y_5 &\leq a_5,&\circled{6}\\
	\cdots\\
	y_{n-2}+y_{n-1} &\leq a_{n-1},\\
	y_{n-1}+y_n &\leq a_n.
	\end{aligned}
	\right.&\quad\quad(11)\quad&
	\left\{
	\begin{aligned}
	y_4 &\leq a_4,\\
	y_4+y_5 &\leq a_5,\\
	\vdots\\
	y_{n-2}+y_{n-1} &\leq a_{n-1},\\
	y_{n-1}+y_n &\leq a_n,\\
	y_n &\leq a_1.
	\end{aligned}
	\right.
	\end{array}
	\]
	Let $h=\lceil y_4+\dots+y_n\rceil$. Since the vector $\mathbf{y}=(y_4,y_5,\dots,y_{n})^T$  satisfies  the above  system (11) of  inequalities, it follows  from  Theorem \ref{divide3} that
 there are at least $h$ monomials  $f_{i1},\ldots,f_{ih}\in \{x_4x_5,\ldots,x_{n-1}x_n,x_nx_1\}$ such that $\xb^{\bb}$ can be  divisible by $\prod_{j=1}^{h}f_{ij}$, where ${\bb}=(a_1,0,0,a_4,\ldots,a_n)$. If
 $h\geq k$, then $\xb^{\bb}\in I^k$, which forces $\xb^{\ab}\in I^k$.

	In the following, we assume that $h<k$. One has
	\[
	a_2+a_3 \geq \omega_1y_1+2y_2+\omega_3y_3 \geq 2(y_1+y_2+y_3) \geq 2(k-h).
	\]
	We distinguish between the two cases:
	
	(i)  If $a_2, a_3 \geq k-h$, then  $(x_2x_3)^{ k-h} | x_2^{a_2}x_3^{a_3}$. Since $\xb^{\ab}=(x_2^{a_2}x_3^{a_3})\xb^{\bb}$ and $\xb^{\bb}$ can be  divisible by $\prod_{j=1}^{h}f_{ij}$, it follows  that
$\xb^{\ab}$  can be  divisible by $(x_2x_3)^{ k-h}\prod_{j=1}^{h}f_{ij}$, which implies that  $\xb^{\ab}\in I^k$.

(ii) If $a_2 \geq k-h>a_3$ or  $a_3 \geq k-h>a_2$. We can assume that $a_2 \geq k-h>a_3$ by  symmetry. In this case, we have
	$k-h>a_3\ge y_2+\omega_3y_3\ge  y_2+y_3$. Hence
	$y_1 \geq k-(y_2+y_3)-(y_4+\dots+y_n)\geq k-a_3-(y_4+\dots+y_n)\ge k-a_3-h$. It follows  that $a_1 \geq \omega_1y_1+y_n\ge \omega_1(k-a_3-h)+y_n$.
	On the other hand, $a_2+a_3\ge \omega_1y_1+2y_2+\omega_3y_3 \ge \omega_1y_1+2y_2+2y_3 \ge \omega_1(k-a_3-h)+\omega_1(y_1+a_3+h-k)+2(y_2+y_3) \ge \omega_1(k-h-a_3)+2a_3$.
This implies that $a_2 \geq \omega_1(k-h-a_3)+a_3$. Note that the vector $\mathbf{y}=(y_4,y_5,\dots,y_{n})^T$  also satisfies  the above  system (11) by  replacing  $a_1$ by  $a_1-\omega_1(k-h-a_3)$. it follows  from  Theorem \ref{divide3} that
 there are at least $h$ monomials  $g_{i1},\ldots,g_{ih}\in \{x_4x_5,\ldots,x_{n-1}x_n,x_nx_1\}$ such that $\xb^{\cb}$ can be  divisible by $\prod_{j=1}^{h}g_{ij}$, where ${\cb}=(a_1-\omega_1(k-h-a_3),0,0,a_4,\ldots,a_n)$. Therefore  $\xb^{\ab}$ can be  divisible by  $(x_1^{\omega_1}x_2^{\omega_1})^{k-h-a_3}(x_2x_3)^{a_3}(\prod_{j=1}^{h}g_{ij})$, so $\xb^{\ab} \in I^k$.
\end{proof}

\begin{Theorem}
\label{cycle3}
Let $C_\omega^n$ be a weighted cycle on the vertex set $[n]$, and let $I=I(C_\omega^n)$ be its edge ideal. If $I$ is integrally closed, then $I$ is normal.			
\end{Theorem}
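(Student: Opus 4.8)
The plan is to prove the theorem by a combinatorial reduction to the cases already settled in Theorems \ref{cycle1} and \ref{cycle2}, together with a direct treatment of the cycles carrying at most one non-trivially weighted edge. Write $E'\subseteq E(C_\omega^n)$ for the set of edges of non-trivial weight and put $p=|E'|$; everything hinges on first bounding $p$ using the characterization in Theorem \ref{main}. Since $I$ is integrally closed, $C_\omega^n$ contains no induced $P_\omega^3$, no induced $P_\omega^2\sqcup P_\omega^2$, and no induced $C_\omega^3$. The absence of an induced $P_\omega^3$ (and of $C_\omega^3$ when $n=3$) says precisely that no two edges of $E'$ share a vertex, i.e. $E'$ is a matching. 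The absence of an induced $P_\omega^2\sqcup P_\omega^2$ says that for any two edges $e,f\in E'$ at least one of the two arcs of the cycle joining them must consist of a single trivial edge; otherwise the four endpoints of $e$ and $f$ would induce exactly two disjoint weighted edges.

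Combining these two constraints bounds $p$. First I would observe that if $p\ge 4$, then choosing two edges of $E'$ that are cyclically non-consecutive within $E'$, both connecting arcs pass through a further edge of $E'$ and hence have length $\ge 2$, producing the forbidden $P_\omega^2\sqcup P_\omega^2$; thus $p\le 3$. The same count shows that $p=3$ forces each of the three arcs between consecutive heavy edges to be a single trivial edge, so $n=6$ and the heavy edges are equally spaced — which is exactly the hypothesis of Theorem \ref{cycle1}. Hence $p\in\{0,1,2,3\}$.

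With $p$ bounded, three of the four cases reduce immediately: for $p=2$ the graph is covered by Theorem \ref{cycle2}, and for $p=3$ by Theorem \ref{cycle1} (with $n=6$). For $p=0$ the ideal $I=I(C_n)$ is the ordinary squarefree edge ideal of a cycle; when $n$ is even the cycle is bipartite and $I$ is normal by \cite{SVV}, while when $n$ is odd $C_n$ contains no two vertex-disjoint odd cycles, so it satisfies the odd cycle condition and $I$ is again normal by the standard criterion for edge ideals (see \cite{V3}). The remaining case $p=1$ I would treat directly, in parallel with Theorem \ref{cycle2}: normalizing the heavy edge to $e_1=\{1,2\}$ with $\omega_1\ge 2$ and all other weights $1$, a monomial $\xb^{\ab}\in\mathcal{G}(\overline{I^k})$ yields, via Lemma \ref{normal}(\ref{normal-2}), a vector $\mathbf{y}\in\mathbb{R}_{\geq 0}^{n}$ with $y_1+\cdots+y_n\ge k$ together with $\omega_1 y_1+y_n\le a_1$, $\omega_1 y_1+y_2\le a_2$, and $y_{i-1}+y_i\le a_i$ for $3\le i\le n$. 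I would split on the sizes of $a_1,a_2$ relative to $k-h$, where $h=\lceil y_3+\cdots+y_n\rceil$ measures the contribution of the complementary path $C_\omega^n\setminus e_1$: when both are large the heavy edge absorbs the required power directly, and otherwise I would transfer the surplus onto a neighbouring light edge and apply Theorem \ref{divide3} to that path, exactly as in Theorem \ref{cycle2}. In every case this produces a factorization witnessing $\xb^{\ab}\in I^k$, giving $\overline{I^k}\subseteq I^k$ and hence normality.

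I expect the main obstacle to be the combinatorial enumeration of the second paragraph rather than the algebra. The earlier theorems are each stated for an \emph{exact} number of heavy edges and for a specific placement, so the genuine new content is verifying that integral closedness permits no other configurations — in particular the rigidity $p=3\Rightarrow n=6$ and the matching/single-arc structure — after which all four cases plug into existing results or into the short $v_\mathbf{a}^\ast$-argument above.
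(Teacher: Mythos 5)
Your architecture is exactly the paper's: invoke Theorem \ref{main} to cap the number $p$ of non-trivially weighted edges at three, send $p=3$ and $p=2$ to Theorems \ref{cycle1} and \ref{cycle2}, quote known results for the trivially weighted cycle, and handle $p=1$ directly via Lemma \ref{normal}(\ref{normal-2}) and Theorem \ref{divide3}. Your combinatorial second paragraph mostly re-derives what the paper simply attributes to Theorem \ref{main} (and, for the rigidity $p=3\Rightarrow n=6$, to the proof of Theorem \ref{cycle1}); it contains one inaccuracy: integral closedness does \emph{not} force the heavy edges to form a matching when $n=3$, since a triangle with exactly two heavy edges contains none of the three forbidden induced subgraphs (the forbidden $C_\omega^3$ requires all three edges heavy). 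This slip happens to be harmless for your reduction, because Theorem \ref{cycle2} is stated for an arbitrary cycle with exactly two heavy edges and your bound $p\le 3$ is trivial when $n=3$.

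The genuine defect is in the $p=1$ case, which is the only part of this theorem requiring new algebra rather than delegation. As written, your bookkeeping fails on two counts. First, $h=\lceil y_3+\cdots+y_n\rceil$ omits $y_2$, so the inequality $y_1\ge k-h$ --- the only source of leverage on the heavy edge --- is unavailable: the constraint $y_1+\cdots+y_n\ge k$ only gives $y_1\ge k-y_2-h$, and $y_2$ need not be small. Second, the threshold $k-h$ in ``split on the sizes of $a_1,a_2$ relative to $k-h$'' is wrong for a heavy edge: extracting $(x_1^{\omega_1}x_2^{\omega_1})^{k-h}$ requires $a_1,a_2\ge\omega_1(k-h)$, and mere largeness of $a_1,a_2$ gives no lower bound on $y_1$, which is what actually justifies the extraction. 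The paper's own argument shows that with the correct $h=\lceil y_2+\cdots+y_n\rceil$ no case split on $a_1,a_2$ is needed at all: if $h\ge k$, Theorem \ref{divide3} applied to the light path (whose system includes $y_2\le a_2$ and $y_n\le a_1$, read off from the two inequalities at the heavy vertices) already yields $k$ factors; if $h<k$, then $y_1\ge k-h$, hence $a_1\ge\omega_1(k-h)+y_n$ and $a_2\ge\omega_1(k-h)+y_2$, so one pulls out $(x_1^{\omega_1}x_2^{\omega_1})^{k-h}$, observes that the residual exponent vector still satisfies the path system, and lets Theorem \ref{divide3} supply the remaining $h$ factors, for a total of $k$. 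Your sketch gestures at this repair (``transfer the surplus onto a neighbouring light edge''), but the case analysis as stated does not go through.
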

\begin{proof}Let $E(C_\omega^n)=\{e_1,\dots,e_n\}$ with each  $\omega_i=\omega(e_i)$, where  $e_i=\{i,i+1\}$ for $i\in [n-1]$ and $e_n=\{1,n\}$.
Since $I$ is  integrally closed, $C_\omega^n$ has at most three edges with non-trivial weights by Theorem \ref{main}. If $C_\omega^n$ is a trivially weighted cycle, then $I$ is normal by
\cite[Proposition 2.1 and Corollary 2.8]{AVV} and \cite[Proposition 2.1.2]{HSV}.  Now we assume that $C_\omega^n$ has at least one edge with non-trivial weight. In this case, we will prove that $\overline{I^k}=I^k$ for all $k\ge 2$.
Since $I^k\subseteq \overline{I^k}$ is always valid, it suffices to prove that  $\overline{I^k}\subseteq I^k$.
We can divide this into the following two cases:

(1) If   $C_\omega^n$ has exactly three edges  or two edges with non-trivial weights, then $\overline{I^k}\subseteq I^k$ by Theorem \ref{cycle1} and Theorem \ref{cycle2}, respectively.

(2)  If $C_\omega^n$ has only one edge with non-trivial weight, then we can assume by symmetry that $\omega_1 \geq 2$ and $\omega_i=1$ for any $i=2,\ldots,n$.
In this case, let $\xb^{\ab}=x_1^{a_1}\cdots x_n^{a_n} \in \mathcal{G}(\overline{I^k})$, then $v_\mathbf{a}^\ast(I)\geq k$ by Lemma \ref{normal}(\ref{normal-2}).  It follows  from the definition of $v_\mathbf{a}^\ast(I)$ that there exists a vector $\mathbf{y}=(y_1,y_2,\dots,y_n)^T\in \mathbb{R}_{\geq 0}^{n}$  which satisfies the following  system (12)  of inequalities
\[
\begin{array}{cccc}
(12)\quad&
\left\{
\begin{aligned}
y_1+\dots+y_n &\geq k,&\circled{1}\\
\omega_1y_1+y_n &\leq  a_1,&\circled{2}\\
\omega_1y_1+y_2 &\leq a_2,&\circled{3}\\
y_2+y_3 &\leq a_3,&\circled{4}\\
y_3+y_4 &\leq a_4,&\circled{5}\\
y_4+y_5 &\leq a_5,&\circled{6}\\
\vdots\\
y_{n-2}+y_{n-1} &\leq a_{n-1},\\
y_{n-1}+y_n &\leq a_n.
\end{aligned}
\right.&\quad\quad(13)\quad&
\left\{
\begin{aligned}
y_2 &\leq a_2,\\
y_2+y_3 &\leq a_3,\\
y_3+y_4 &\leq a_4,\\
y_4+y_5 &\leq a_5,\\
\vdots\\
y_{n-2}+y_{n-1} &\leq a_{n-1},\\
y_{n-1}+y_n &\leq a_n,\\
y_n &\leq a_1.
\end{aligned}
\right.
\end{array}
\]
Let $h= \lceil y_2+\dots+y_n \rceil$. We distinguish between the following  two cases:

 (1) If $h \ge k$, then  the vector $\mathbf{y'}=(y_2,\dots,y_n)^T\in \mathbb{R}_{\geq 0}^{n-1}$ satisfies  the system (13) of inequalities. It follows from Theorem \ref{divide3} that  there exist at least $h$ monomials $f_{1},\ldots,f_{h}\in \{x_2x_3,\ldots,x_{n-1}x_n,x_nx_1\}$ such that $\xb^{\ab}$ is divisible by $\prod_{i=1}^{h}f_{i}$, so $\xb^{\ab} \in I^k$.

(2) If $h<k$, then $y_1\geq k-h$ by $\circled{1}$ in system (12). This implies that
\[
a_1 \geq \omega_1(k-h)+y_n \ \ \text{and} \ \ a_2 \geq \omega_1(k-h)+y_2.
\]
Thus   the vector $\mathbf{y'}=(y_2,\dots,y_n)^T\in \mathbb{R}_{\geq 0}^{n-1}$ satisfies the system (13) by  replacing $a_1$ by $a_1-\omega_1(k-h)$ and $a_2$ by $a_2-\omega_1(k-h)$.
By Theorem \ref{divide3}, there exist at least $h$ monomials $g_{1},\ldots,g_{h}\in \{x_2x_3,\ldots,x_{n-1}x_n,x_nx_1\}$ such that $\xb^{\bb}$ is divisible by $\prod_{i=1}^{h}g_{i}$, where $\mathbf{b}=(a_1-\omega_1(k-h),a_2-\omega_1(k-h),a_3,\ldots,a_n)$. So $\xb^{\ab}$ can be  divisible by $(x_1^{\omega_1}x_2^{\omega_1})^{(k-h)}\prod_{i=1}^{h}f_{i}$. Therefore,
$x^\mathbf{a} \in I^k$.
\end{proof}

\begin{Theorem}
\label{line}
Let $L_\omega^n$ be a weighted path on the set $[n]$, and let $I=I(L_\omega^n)$ be its edge ideal. If $I$ is integrally closed, then $I$ is normal.
\end{Theorem}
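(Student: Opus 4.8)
The plan is to avoid a direct linear-programming computation and instead reduce to the cycle case already settled in Theorem \ref{cycle3}, by realizing $L_\omega^n$ as an induced subgraph of a suitable \emph{integrally closed} cycle and then invoking Remark \ref{induced graph}. The point is that the forbidden-subgraph characterization of Theorem \ref{main} behaves well under passing to induced subgraphs, so the whole argument becomes combinatorial rather than numerical.

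First I would enlarge the path. Say $L_\omega^n$ has vertex set $[n]$ and edges $\{i,i+1\}$ for $i\in[n-1]$ carrying weights $\omega_i$. Adjoin a new vertex $n+1$ together with the two edges $\{n,n+1\}$ and $\{n+1,1\}$, each of trivial weight $1$, to obtain a weighted cycle $C_\omega^{n+1}$ on $[n+1]$. By the definition of induced subgraph, the induced subgraph of $C_\omega^{n+1}$ on $\{1,\dots,n\}$ has exactly the edges $\{i,i+1\}$, $i\in[n-1]$, with their original weights (the edge $\{n+1,1\}$ drops out, as $n+1$ is excluded), so this induced subgraph is precisely $L_\omega^n$. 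Since a path has $n\geq 2$, we have $n+1\geq 3$ and $C_\omega^{n+1}$ is a genuine cycle.

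Next I would verify that $C_\omega^{n+1}$ is integrally closed, which is where the argument really takes place. By Theorem \ref{main} it suffices to show $C_\omega^{n+1}$ contains none of the three graphs of Lemma \ref{3subgraph} as an induced subgraph. Suppose $F$ were such a forbidden induced subgraph; then every edge of $F$ is non-trivial, whereas the two edges of $C_\omega^{n+1}$ incident to $n+1$ are trivial. Each vertex of $F$ has degree at least $1$ in $F$ (this holds for $P_\omega^3$, $P_\omega^2\sqcup P_\omega^2$ and $C_\omega^3$ alike), so if $n+1\in V(F)$ it would be incident in $F$ to a non-trivial edge of $C_\omega^{n+1}$ at $n+1$, which does not exist. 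Hence $V(F)\subseteq\{1,\dots,n\}$, and since the induced subgraph of $C_\omega^{n+1}$ on any subset of $\{1,\dots,n\}$ coincides with the corresponding induced subgraph of $L_\omega^n$, the graph $F$ would be a forbidden induced subgraph of $L_\omega^n$. This contradicts the hypothesis that $I(L_\omega^n)$ is integrally closed (via Theorem \ref{main}), so $C_\omega^{n+1}$ is integrally closed.

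Finally I would conclude: by Theorem \ref{cycle3} the integrally closed cycle $C_\omega^{n+1}$ has normal edge ideal $I(C_\omega^{n+1})$, and since $L_\omega^n$ is an induced subgraph of $C_\omega^{n+1}$, Remark \ref{induced graph} gives that $I(L_\omega^n)$ is normal. The one delicate point is the third step — checking that padding the path with a single vertex and two \emph{trivial} edges cannot manufacture a new forbidden configuration — and this is exactly where the triviality of the two added edges is essential, since any forbidden subgraph must have all edges non-trivial and hence cannot reach the new vertex. (Should one instead insist on a self-contained argument mirroring Theorems \ref{cycle1}--\ref{cycle3}, the real work would be to take a real feasible solution $\mathbf{y}$ of the path system of type $(4)$, peel off copies of the non-trivial edges — of which Theorem \ref{main} guarantees at most two, at distance two apart — and apply Theorem \ref{divide3} to the trivially weighted remainder, with Remark \ref{Rounding} controlling the rounding; the bookkeeping of that decomposition would then be the main obstacle.)
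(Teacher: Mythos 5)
Your proposal is correct and follows essentially the same route as the paper's own proof: enlarge $L_\omega^n$ to the cycle $C_\omega^{n+1}$ by adding one vertex and two trivially weighted edges, deduce via Theorem \ref{main} that $I(C_\omega^{n+1})$ is integrally closed, apply Theorem \ref{cycle3} to get its normality, and conclude with Remark \ref{induced graph}. In fact, your argument that no forbidden induced subgraph of Lemma \ref{3subgraph} can involve the new vertex (since all its vertices have positive degree and must meet only non-trivially weighted edges) spells out a verification that the paper leaves implicit.
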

\begin{proof}
Let $E(L_\omega^n)=\{e_1,\ldots,e_{n-1}\}$  with  each $\omega_i=\omega(e_i)$ and $e_i=\{i,i+1\}$. Since  $L_\omega^n$ is an induced subgraph of $C_\omega^{n+1}$, where $E(C_\omega^n)=E(L_\omega^n)\cup \{e_n,e_{n+1}\}$, $e_n=\{n,n+1\}$, $e_{n+1}=\{1,n+1\}$
 and $\omega(e_n)=\omega(e_{n+1})=1$.  Since $I$ is integrally closed,  $I(C_\omega^{n+1})$ is also integrally closed by Theorem \ref{main}. Again by Theorem \ref{cycle3}, we get that $I(C_\omega^{n+1})$ is normal, which implies that $I$ is normal by Remark \ref{induced graph}.
\end{proof}

\medskip
\hspace{-6mm} {\bf Acknowledgments}

 \vspace{3mm}
\hspace{-6mm}  This research is supported by the Natural Science Foundation of Jiangsu Province (No. BK20221353). The authors are grateful to the computer algebra systems Normaliz \cite{BRTS} for providing us with a large number of examples.

\medskip
\hspace{-6mm} {\bf Data availability statement}

\vspace{3mm}
\hspace{-6mm}  The data used to support the findings of this study are included within the article.

\medskip
\hspace{-6mm} {\bf Conflict of interest statement}

\vspace{3mm}
\hspace{-6mm}  All authors declare that they have no conflicts of interest to this work.

\end{document}